\documentclass[a4paper,pdftex,12pt]{article}

\usepackage{geometry}
\geometry{left=43.84mm,right=43.4mm,top=51.9mm,bottom=55.7mm}
\usepackage{times}
\usepackage{amssymb,amsmath}
\usepackage{epsfig, graphicx}

\newtheorem{Theorem}{Theorem}
\newtheorem{Corollary}[Theorem]{Corollary}
\newtheorem{Lemma}[Theorem]{Lemma}
\newtheorem{Definition}[Theorem]{Definition}
\newenvironment{proof}
{\begin{trivlist}\item[]{{\sc Proof.}}}{\hfill{$\square$}\noindent\end{trivlist}}

\title{A lower bound for $4$-regular planar unit distance graphs}
\author{Sascha Kurz$^\star$}
\date{\small $^\star$Fakult\"at f\"ur Mathematik, Physik und Informatik, Universit\"at Bayreuth, Germany, sascha.kurz@uni-bayreuth.de}

\newcommand{\lb}{$34$}

\begin{document}

\maketitle

{\small
\noindent \textbf{Abstract:}
We perform an exhaustive search for the minimum $4$-regular unit distance graph resulting in a lower bound of \lb~vertices. 
}


\section{Introduction}

\noindent
A graph is called a unit distance graph in the plane if there is an embedding of the vertex set into the plane such that every pair of adjacent vertices is at unit distance apart and the edges are non-crossing. Recognizing whether a given graph is a unit distance graph in the plane is NP-hard, see \cite{pre05493581}. In a companion paper \cite{companion} we haven given several necessary and computationally cheap criteria. 
Here we consider the following geometric puzzle \cite{matchsticks_in_the_plane}: What is the minimum number of vertices of a planar unit distance graph where each vertex has degree $r$? 

Due to the Eulerian polyhedron formula for finite graphs only $r\le 5$ is possible. For $r=1,2$ the minimal examples are a single vertex and a triangle, respectively. The case $r=3$ is an entertaining amusement resulting in a minimum number of $8$~vertices. For $r=4$ the smallest known example is the so-called Harborth graph consisting of $52$~vertices, which has not been improved for more than twenty years. A vertex-degree of $r=5$ is not possible for a planar unit distance graph, as was recently proven in \cite{no_five_regular}.

We will verify that for $r=4$ at least \lb~vertices are necessary. To that end in \cite{companion} the computer program \texttt{plantri} was utilized to generate the $3$-connected planar $4$-regular graphs, where a graph $G$ is called $k$-connected if the graph remains connected when you delete fewer than $k$ vertices from the graph, and to show that up to $33$~vertices none of them is a unit distance graph in the plane. 

\section{$\mathbf{r}$-regular matchstick graphs}

\noindent
The planar $4$-regular unit distance graph with the minimum number of vertices is obviously $1$-connected, so that we consider possible subgraphs:

\begin{Definition}
  An \textbf{(incomplete) $\mathbf{r}$-regular matchstick graph} $\mathcal{M}$ consists of a connected planar graph $G=(V,E)$
  and a straight-line embedding 
  in the plane such that all edges have length $1$ and are non-intersecting. Additionally the nodes on the outer face of
  $\mathcal{M}$ all have degree at most $r$ and all other nodes have degree exactly $r$.
\end{Definition}

\begin{Definition}
  We abbreviate the number $|V|$ of vertices by $n(\mathcal{M})$ and by $\mathcal{K}(\mathcal{M})$ we denote the set of vertices which is situated on
  the outer face of $\mathcal{M}$.  The set of the remaining vertices is denoted by $\mathcal{I}(\mathcal{M})$. For the
  cardinality of $\mathcal{K}(\mathcal{M})$ we write $k(\mathcal{M})$. By
  $\tau(\mathcal{M})$ we denote the quantity
  $r\cdot k(\mathcal{M})-\sum\limits_{v\in\mathcal{K}(\mathcal{M})}\delta(v)$, where $\delta(v)$ denotes the
  degree of vertex $v$. By $A_i(\mathcal{M})$ we denote the number of faces of $\mathcal{M}$ which are
  $i$-gons. Here we also count the outer face.
\end{Definition}

Whenever it is clear from the context which matchstick graph $\mathcal{M}$ is meant we only write $n$, $\mathcal{K}$, $\mathcal{I}$, $k$, $\tau$, $A_i$. 
Using $|V|-|E|+|F|=2$ we get:

\begin{Lemma}
  For an (incomplete) $4$-regular matchstick graph $\mathcal{M}$ we have
  \label{lemma_A_i_sum}
  \begin{eqnarray}
    |E|&=&\frac{1}{2}\cdot\sum_{i=3}^\infty i\cdot A_i,\quad\quad|V|=\frac{1}{4}\cdot\sum_{i=3}^\infty i\cdot A_i\,+\frac{\tau}{4}, \nonumber\\
    |F|&=&\sum\limits_{i=3}^\infty A_i,\,\,\,\quad\quad\quad\quad n=|V|=|F|-2+\frac{\tau}{2},\nonumber\\
    8-\tau&=&\sum_{i=3}^\infty (4-i) A_i=A_3-A_5-2A_6-3A_7-\dots,\label{eq_A_i_sum}\\
    A_3&\ge& 4+k-\tau. \label{eq_force_triangles}
  \end{eqnarray}
\end{Lemma}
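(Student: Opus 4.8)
The plan is to establish the five displayed relations in order, since each builds on the previous ones. The foundational tools are the handshaking lemma and the face-degree counting identity: each edge borders exactly two faces, so summing the number of sides over all faces counts every edge twice. This gives $\sum_{i\ge 3} i\cdot A_i = 2|E|$, which immediately yields the claimed formula for $|E|$. For $|V|$, I would apply the handshaking lemma $\sum_{v} \delta(v) = 2|E|$, but account for the fact that boundary vertices may have degree less than $r=4$. By the definition of $\tau$, the total degree deficiency on the outer face is exactly $\tau = 4k - \sum_{v\in\mathcal{K}}\delta(v)$, so $\sum_v \delta(v) = 4|V| - \tau = 2|E|$, and combining with the edge formula gives $|V| = \tfrac14\sum_i i\cdot A_i + \tfrac{\tau}{4}$.

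Next I would handle $|F|$ and the Euler-formula consequence. The identity $|F| = \sum_{i\ge 3} A_i$ is just the definition of $A_i$ as a partition of the faces by their number of sides. Substituting $|E|$, $|V|$, and $|F|$ into the Euler relation $|V| - |E| + |F| = 2$ and simplifying should produce $n = |V| = |F| - 2 + \tfrac{\tau}{2}$. For equation~(\ref{eq_A_i_sum}), I would take the expression $4|F| - 2|E|$: writing $4|F| = \sum_i 4A_i$ and $2|E| = \sum_i i\cdot A_i$ gives $4|F| - 2|E| = \sum_i (4-i)A_i$. On the other hand, from $n = |F| - 2 + \tfrac{\tau}{2}$ and $|V| = \tfrac14\sum i A_i + \tfrac{\tau}{4} = \tfrac12|E| + \tfrac{\tau}{4}$ one can solve for $4|F| - 2|E|$ in terms of $\tau$, and the arithmetic should collapse to $8 - \tau$.

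The genuinely interesting step is the inequality~(\ref{eq_force_triangles}), $A_3 \ge 4 + k - \tau$, which is where I expect the main obstacle. Rearranging~(\ref{eq_A_i_sum}) gives $A_3 = 8 - \tau + \sum_{i\ge 5}(i-4)A_i$, so the claim is equivalent to showing $\sum_{i\ge 5}(i-4)A_i \ge k - 4$, i.e.\ the total ``excess'' contributed by large faces is at least $k-4$. The intuition is that the outer face is a single large polygon whose boundary has roughly $k$ edges, so it alone should contribute an excess on the order of $k-4$; one must verify that the outer face has at least $k$ sides (carefully handling cut vertices and repeated boundary visits) and that this lower bound survives even after subtracting any negative slack. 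I would argue that since every vertex of $\mathcal{K}$ lies on the outer face, the outer face is an $i$-gon with $i \ge k$, contributing at least $(k-4)$ to $\sum_{i\ge 5}(i-4)A_i$ whenever $k\ge 5$, while the small remaining cases are checked directly. The delicate point is making the geometric claim ``the unbounded face has at least $k$ sides'' rigorous when the graph is only $1$-connected, so that bridges and pinch points may cause a boundary edge to be traversed twice; handling that bookkeeping correctly is the crux of the argument.
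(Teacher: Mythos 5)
Your proposal is correct and matches the paper's (essentially unstated) argument: the paper derives the lemma in one line from $|V|-|E|+|F|=2$, implicitly using exactly your double counts $\sum_i iA_i=2|E|$ and $\sum_v\delta(v)=4|V|-\tau=2|E|$, and inequality~(\ref{eq_force_triangles}) follows, as you say, because the outer face is an $i$-gon with $i\ge k$, so it contributes at least $k-4$ to $\sum_{i\ge 5}(i-4)A_i$ in~(\ref{eq_A_i_sum}). Your bookkeeping concern about bridges is handled by the standard convention that a face's degree counts each boundary-walk traversal (and is moot anyway once one notes, as the paper does, that evenness of $\tau$ forbids bridges), and the cases $k\le 4$ are immediate since $\sum_{i\ge 5}(i-4)A_i\ge 0$ gives $A_3\ge 8-\tau\ge 4+k-\tau$.
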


We remark that the parameter $\tau$ must be an even integer. Thus a $4$-regular matchstick graph cannot contain a bridge. In the following lemma we will show that a complete, i.~e.{} $\tau=0$, $4$-regular matchstick graph which is not $3$-connected must contain some $4$-regular matchstick graphs with $\tau\le 4$ as subgraphs. Let $\mathcal{M}_\tau$ denote a $4$-regular matchstick graph with given $\tau$, minimum degree $2$, and the minimum number of vertices. For $\tau\ge 6$ we have $n\left(\mathcal{M}_\tau\right)=\frac{\tau}{2}$, where the unique example is given by $C_{\frac{\tau}{2}}$ -- a simple cycle consisting of $\frac{\tau}{2}$ vertices. Since deleting an edge of the outer face decreases $\tau$ by two, we have
\begin{equation}
  \label{eq_monotony}
  n\left(\mathcal{M}_0\right)\ge n\left(\mathcal{M}_2\right)\ge n\left(\mathcal{M}_4\right)\ge 3.
\end{equation}
\begin{Lemma}
  If $\mathcal{M}$ is a $4$-regular matchstick graph with $\tau=2$, then $n(\mathcal{M})
  \ge n\left(\mathcal{M}_2\right)$. If $\mathcal{M}'$ is a $4$-regular matchstick graph with $\tau=4$,
  then $\mathcal{M}'$ consists either of a single vertex or we have $n(\mathcal{M}')
  \ge n\left(\mathcal{M}_4\right)$. 
\end{Lemma}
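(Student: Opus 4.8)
The plan is to exploit the elementary rewriting
$\tau=4k-\sum_{v\in\mathcal{K}}\delta(v)=\sum_{v\in\mathcal{K}}\bigl(4-\delta(v)\bigr)$,
which expresses $\tau$ as a sum of the nonnegative degree deficiencies $4-\delta(v)$ of the boundary vertices. Since every interior vertex has degree exactly $4$, the minimum degree of a matchstick graph is always attained on the outer face, and it can be small only if some boundary vertex carries a large deficiency: a vertex of degree $0,1,2$ contributes $4,3,2$ to the sum. The whole argument is then a case distinction on the smallest occurring degree, reducing everything to the definition of $\mathcal{M}_\tau$ (which demands minimum degree $2$) and to the monotonicity \eqref{eq_monotony}.

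For the first statement I would argue directly. If $\tau=2$, then $\sum_{v\in\mathcal{K}}(4-\delta(v))=2$ with all summands nonnegative, so no boundary vertex can have deficiency $3$ or $4$; hence every vertex of $\mathcal{M}$ has degree at least $2$. Thus $\mathcal{M}$ itself is an admissible competitor in the definition of $\mathcal{M}_2$ (a $4$-regular matchstick graph with $\tau=2$ and minimum degree $2$), and $n(\mathcal{M})\ge n(\mathcal{M}_2)$ follows at once from minimality.

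For the second statement, with $\tau=4$, I would distinguish the minimum degree $\delta_{\min}$ of $\mathcal{M}'$. If $\delta_{\min}=0$ there is an isolated vertex, and connectedness forces $\mathcal{M}'$ to be the single vertex. If $\delta_{\min}\ge 2$, then $\mathcal{M}'$ is an admissible competitor for $\mathcal{M}_4$ and $n(\mathcal{M}')\ge n(\mathcal{M}_4)$. The only delicate case is $\delta_{\min}=1$, i.e.\ a pendant vertex $v$ with neighbour $w$; note that here the deficiency of $v$ already uses up $3$ of the available $4$, so there can be at most one pendant. Since $v$ lies on the outer face, so does $w$, and I would delete $v$ together with its single edge to obtain $\mathcal{M}''$. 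Removing a leaf preserves connectedness, leaves the embedding unit-distance and non-crossing, and touches no interior vertex, so the only change is that $w$ drops by one degree. The bookkeeping I expect to be the crux: excising $v$ removes its deficiency $3$ and raises that of $w$ by $1$, so $\tau$ decreases by exactly $2$ and $\mathcal{M}''$ is a genuine matchstick graph with $\tau=2$.

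Finally I would assemble the pieces. Applying the already-proved first statement to $\mathcal{M}''$ yields $n(\mathcal{M}'')\ge n(\mathcal{M}_2)$, and combining this with $n(\mathcal{M}')=n(\mathcal{M}'')+1$ and the monotonicity $n(\mathcal{M}_2)\ge n(\mathcal{M}_4)$ from \eqref{eq_monotony} gives $n(\mathcal{M}')\ge n(\mathcal{M}_4)+1>n(\mathcal{M}_4)$, which is in fact stronger than claimed. The main obstacle is precisely this pendant case: one must check carefully that excising a leaf really produces a legitimate matchstick graph, with the outer face and planarity behaving as expected, and that at most one pendant can occur when $\tau=4$, so that a single reduction step suffices before the $\tau=2$ result can be invoked.
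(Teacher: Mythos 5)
Your proposal is correct and is essentially the paper's own argument in expanded form: the paper's two-line proof rests on exactly your bookkeeping, namely that deleting a degree-$1$ vertex decreases $\tau$ by two (deficiency $3$ removed, neighbour's deficiency raised by $1$), so that $\tau=2$ admits no degree-$1$ vertex and $\tau=4$ at most one, after which minimality and the monotonicity~(\ref{eq_monotony}) finish the claim. You merely make explicit the details the paper leaves implicit (connectedness after leaf deletion, the isolated-vertex case, and the reduction of the one-pendant case to the $\tau=2$ statement).
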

\begin{proof}
  Deleting a vertex of degree $1$ in a $4$-regular matchstick graph decreases $\tau$ by two. Thus $\mathcal{M}$ can not
  contain a vertex of degree $1$ and $\mathcal{M}'$ can contain at most one vertex of degree $1$.
\end{proof}


\begin{Lemma}
  \label{lemma_not_3_connected}
  If $\mathcal{M}$ is a complete $4$-regular matchstick graph which is not $3$-connected, then we have
  $$
    n(\mathcal{M})\ge\min\Big(2\cdot n\left(\mathcal{M}_2\right),
    2\cdot n\left(\mathcal{M}_4\right)+2\Big).
  $$
\end{Lemma}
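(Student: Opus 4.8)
The plan is to exploit a smallest separating vertex set $S$ of $\mathcal{M}$: since $\mathcal{M}$ is connected but not $3$-connected, $|S|\in\{1,2\}$, and I would cut $\mathcal{M}$ along $S$ into pieces that are themselves $4$-regular matchstick graphs of small $\tau$, then add the vertex counts. The recurring bookkeeping principle is that deleting a vertex of $S$ removes some incident edges, and the total degree deficit created equals the number of deleted edges; provided the affected vertices come to lie on the outer face of the resulting piece, this deficit is exactly the piece's parameter $\tau$. One then compares each piece with $\mathcal{M}_2$ or $\mathcal{M}_4$ via the two preceding lemmas.

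First I would treat a cut vertex $S=\{v\}$. Because every vertex of $\mathcal{M}$ has the even degree $4$, the edges at $v$ split as $2+2$ into exactly two components $C_1,C_2$ of $\mathcal{M}-v$ (an odd split would leave a component with an odd degree sum). Each $C_i$ with its inherited embedding is then a $4$-regular matchstick graph whose only deficient vertices are the one or two former neighbours of $v$, so $\tau(C_i)=2$ and $n(C_i)\ge n(\mathcal{M}_2)$; hence $n(\mathcal{M})=|C_1|+|C_2|+1\ge 2n(\mathcal{M}_2)$.

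Next I would treat a $2$-cut $S=\{u,w\}$ with $\mathcal{M}$ being $2$-connected. If removing $S$ leaves three or more components, then deleting $u,w$ already displays at least three pieces whose $\tau$-values sum to $8-2e_{uw}\le 8$ (where $e_{uw}\in\{0,1\}$ records a possible edge $uw$); since each is even and at least $2$ by $2$-connectivity, each is at most $4$, each piece has at least two vertices (a degree-$4$ vertex cannot be adjacent only to $u,w$), so each contributes $\ge n(\mathcal{M}_4)$ and $n(\mathcal{M})\ge 3n(\mathcal{M}_4)+2$. So assume exactly two components $C_1,C_2$, and write $a_i,b_i$ for the numbers of edges from $u,w$ into $C_i$. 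The split attaching $\{u,w\}$ to $C_1$ yields the two matchstick graphs $C_1\cup\{u,w\}$ and $C_2$, both with $\tau=a_2+b_2=:t$; attaching to $C_2$ gives two graphs with $\tau=a_1+b_1=:t'$, where $t+t'=8-2e_{uw}\le 8$ and $t,t'\ge 2$ are even. If $\min(t,t')=2$, the corresponding split produces two $\tau=2$ pieces partitioning $V(\mathcal{M})$, so $n(\mathcal{M})\ge 2n(\mathcal{M}_2)$. The only remaining option is $t=t'=4$ (forcing $e_{uw}=0$); then I instead delete $u$ and $w$ altogether, so $C_1,C_2$ alone are $\tau=4$ matchstick graphs on at least two vertices each, giving $n(\mathcal{M})=|C_1|+|C_2|+2\ge 2n(\mathcal{M}_4)+2$. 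Combining all cases yields the claimed minimum.

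The step I expect to be the main obstacle is the geometric bookkeeping rather than the counting: I must guarantee that after deleting $v$ (respectively $u,w$) the degree-deficient vertices of each piece genuinely lie on its \emph{outer} face, so that the piece is an incomplete $4$-regular matchstick graph in the sense of the definition (no interior vertex of degree below $4$) and its $\tau$ really equals the number of deleted edges. This is exactly where the planar straight-line embedding must be used: I would argue that a separating set of size at most $2$ in a plane graph can be realized so that, for each component, the region vacated by the deleted vertices merges with the outer face, placing the affected vertices on the outer boundary; only then do the comparisons $n(C_i)\ge n(\mathcal{M}_2)$ and $n(C_i)\ge n(\mathcal{M}_4)$ become legitimate.
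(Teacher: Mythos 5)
Your proposal is correct and essentially reproduces the paper's own proof: the same case split (cut vertex; then a $2$-cut with three or more, respectively exactly two, components), the same parity accounting $\tau(\mathcal{M})=\sum_i\tau\left(\mathcal{C}_i\right)-(\text{deleted edge-endpoints})$ together with the two preceding lemmas, and your uniform ``split assigning $\{u,w\}$ to one side'' is exactly the paper's ad hoc trick of deleting the edges $\{x,x'\}$ and $\{y,y'\}$ in its $\{2,6\}$ subcase, while your $t=t'=4$ case matches its $(4,4)$ case. The outer-face issue you flag at the end (that the degree-deficient vertices of each piece must lie on its outer face, which a nested component could violate) is a genuine subtlety, but the paper's proof passes over it silently as well, so on this point you are, if anything, more careful than the original.
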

\begin{proof}
  If deleting a vertex $x$ from $\mathcal{M}$ results in $s$ matchstick graphs $\mathcal{C}_1,\dots,\mathcal{C}_s$,
  then we have
  $
    0=\tau(\mathcal{M})=\sum\limits_{i=1}^s\tau\left(\mathcal{C}_i\right)\,-\,4
  $.
  In this case only $s=2$, $\tau\left(\mathcal{C}_i\right)=2$ is possible, which results in
  $n(\mathcal{M})\ge 2\cdot n\left(\mathcal{M}_2\right)+1$.
  
  In the remaining part of the proof we assume that $\mathcal{M}$ is $2$-connected. Let $\{x,y\}$ be an edge in $\mathcal{M}$.
  If deleting vertices $x$ and $v$ from $\mathcal{M}$ results in $s$ matchstick graphs $\mathcal{C}_1,\dots,\mathcal{C}_s$,
  then we have
  $
    0=\tau(\mathcal{M})=\sum\limits_{i=1}^s\tau\left(\mathcal{C}_i\right)\,-\,6
  $.
  
  Thus only $s=3$, $\tau\left(\mathcal{C}_i\right)=2$ or
  $s=2$, $\left\{\tau\left(\mathcal{C}_1\right),\tau\left(\mathcal{C}_2\right)\right\}=\{2,4\}$, where no $\mathcal{C}_i$ consists of a single vertex, 
  are possible. Due to Equation~(\ref{eq_monotony}) we have
  $n(\mathcal{M})\ge 2\cdot n\left(\mathcal{M}_4\right)+2$ in this case.
  
  It remains to consider the deletion of vertices $x$, $y$ where $\{x,y\}$ is not an edge of $\mathcal{M}$. Here we have
  $
    0=\tau(\mathcal{M})=\sum\limits_{i=1}^s\tau\left(\mathcal{C}_i\right)\,-\,8
  $,
  using the notation from above. If $s\ge 2$ then at least two $\mathcal{C}_i$ have a $\tau$ equal to $2$, so that
  the proposed inequality is true. For $s=2$ only $\tau\left(\mathcal{C}_i\right)=4$,
  where no $\mathcal{C}_i$ consists of a single vertex, or $\left\{\tau\left(\mathcal{C}_1\right),\tau\left(\mathcal{C}_2\right)\right\}=\{2,6\}$ are
  possible. In the latter case we consider the neighbors $x'$ and $y'$ of $x$ and $y$ in the connectivity component 
  with $\tau=2$. Deleting the edges $\{x,x'\}$ and $\{y,y'\}$ from $\mathcal{M}$ results in two graphs $\mathcal{C}_1$, $\mathcal{C}_2$ with $\tau\left(\mathcal{C}_i\right)=2$.
\end{proof}

Analogously we can prove:
\begin{Lemma}
  \label{lemma_m_tau_two_connected}
  The $4$-regular matchstick graphs $\mathcal{M}_2$ and $\mathcal{M}_4$ are $2$-connected.
\end{Lemma}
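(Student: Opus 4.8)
The plan is to mirror the deletion argument of Lemma~\ref{lemma_not_3_connected}, but now to remove a single hypothetical cut vertex and to exploit the minimality that defines $\mathcal{M}_2$ and $\mathcal{M}_4$. Since a matchstick graph is connected by definition, for either $\mathcal{M}=\mathcal{M}_2$ or $\mathcal{M}=\mathcal{M}_4$ it suffices to rule out a cut vertex. So I assume for contradiction that deleting a vertex $x$ of degree $d$ disconnects $\mathcal{M}$ into matchstick graphs $\mathcal{C}_1,\dots,\mathcal{C}_s$ with $s\ge 2$. Because $\mathcal{M}$ has minimum degree $2$, no $\mathcal{C}_i$ can be a single vertex and $d\in\{2,3,4\}$. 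The first key step is the $\tau$-balance identity: each of the $d$ neighbours of $x$ loses exactly one unit of degree, so counting boundary deficiencies gives $\sum_{i=1}^s\tau(\mathcal{C}_i)=\tau(\mathcal{M})+2d-4$, which for $d=4$ reproduces the identity $0=\tau(\mathcal{M})=\sum_i\tau(\mathcal{C}_i)-4$ used in Lemma~\ref{lemma_not_3_connected}.

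Next I would observe that each component $\mathcal{C}_i$ is adjacent to $x$, hence contains a former neighbour of $x$ of degree at most $3$ on its outer face, so $\tau(\mathcal{C}_i)\ge 1$; since $\tau$ is even this forces $\tau(\mathcal{C}_i)\ge 2$. Combined with the balance identity and $d\le 4$, this leaves only finitely many partitions of $\tau(\mathcal{M})+2d-4\le 8$ into even parts $\ge 2$, which I would enumerate. The second key step is the estimate that a component $\mathcal{C}$ with $\tau(\mathcal{C})=t\in\{2,4\}$ satisfies $n(\mathcal{C})\ge n(\mathcal{M}_t)$: for $t=2$ a degree-$1$ vertex would already contribute deficiency $3>2$, so $\mathcal{C}$ has minimum degree $2$ and the bound is the definition of $\mathcal{M}_2$; for $t=4$, if $\mathcal{C}$ has a degree-$1$ vertex I delete it to obtain a minimum-degree-$2$ graph with $\tau=2$, whence $n(\mathcal{C})\ge n(\mathcal{M}_2)+1\ge n(\mathcal{M}_4)$ by~\eqref{eq_monotony}. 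Writing $n(\mathcal{M})=1+\sum_i n(\mathcal{C}_i)$ and substituting these bounds, every admissible partition yields $n(\mathcal{M})\ge 1+n(\mathcal{M}_\tau)+(\text{nonnegative})>n(\mathcal{M}_\tau)=n(\mathcal{M})$, the desired contradiction.

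The step I expect to be the main obstacle is the only case escaping the clean estimate above, namely the partition $\{2,6\}$, which can occur for $\mathcal{M}_4$ when $d=4$: here the $\tau=6$ component can be as small as a single edge ($n=2$), so the naive bound $n(\mathcal{C}_i)\ge n(\mathcal{M}_6)=3$ fails. The way around it is not to estimate the $\tau=6$ part at all; I only use $n\ge 2$ for it together with $n(\mathcal{C})\ge n(\mathcal{M}_2)\ge n(\mathcal{M}_4)$ for the companion $\tau=2$ component, giving $n(\mathcal{M}_4)\ge 1+n(\mathcal{M}_4)+2$, again a contradiction. I would also record the routine verification that each $\mathcal{C}_i$ inherits a valid straight-line embedding and that the former neighbours of $x$ lie on its outer face, exactly as in Lemma~\ref{lemma_not_3_connected}, so that the quantities $\tau(\mathcal{C}_i)$ are well defined.
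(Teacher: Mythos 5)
Your proposal is correct and takes essentially the same route as the paper: the paper gives no separate proof, stating only that the lemma follows \emph{analogously} to Lemma~\ref{lemma_not_3_connected}, and your argument --- deleting a hypothetical cut vertex of degree $d$, using the balance identity $\sum_i\tau\left(\mathcal{C}_i\right)=\tau(\mathcal{M})+2d-4$ together with parity and the minimality defining $\mathcal{M}_2$ and $\mathcal{M}_4$ --- is precisely that adaptation. Your explicit treatment of the $\{2,6\}$ partition via the crude bound $n\ge 2$ for the $\tau=6$ component, and of degree-$1$ vertices in a $\tau=4$ component via leaf deletion and Inequality~(\ref{eq_monotony}), correctly fills in the details the paper leaves implicit.
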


For the determination of lower bounds for $n\left(\mathcal{M}_2\right)$ and $n\left(\mathcal{M}_4\right)$ we have a special but effective exclusion criterion:
\begin{Lemma}
  \label{lemma_smallest_example}
  Let $\mathcal{M}$ be a $4$-regular matchstick graph with $\tau(\mathcal{M})\in\{2,4\}$. If $\mathcal{M}$ contains a vertex of
  degree $2$ on the outer face which is adjacent to an inner triangle, then
  $n(\mathcal{M})>n\left(\mathcal{M}_{\tau(\mathcal{M})}\right)$.
\end{Lemma}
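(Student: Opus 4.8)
The plan is to prove that such an $\mathcal{M}$ can never be minimal by deleting $v$ and, in one awkward case, a second vertex, and then comparing the resulting graph with $\mathcal{M}_{\tau(\mathcal{M})}$. Throughout I write $a,b$ for the two neighbours of $v$ and take the inner triangle incident to $v$ to be the face on the vertex set $\{v,a,b\}$, so that $ab\in E$. The first step is a purely local observation: since $\delta(v)=2$, exactly two faces meet at $v$, namely the outer face and the triangle $vab$, so each of the edges $va$ and $vb$ separates that triangle from the outer face. Hence $a$ and $b$ lie on the outer face, and removing $v$ together with $va,vb$ merges the triangle into the outer face and leaves a connected planar unit-distance graph $\mathcal{M}^{\ast}$ with $n(\mathcal{M}^{\ast})=n(\mathcal{M})-1$ whose interior vertices still all have degree $4$. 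For the bookkeeping of $\tau$ I would use the global identity $\tau=4n-2|E|$, which is immediate from Lemma~\ref{lemma_A_i_sum}; deleting one vertex and two edges leaves it unchanged, so $\tau(\mathcal{M}^{\ast})=\tau(\mathcal{M})$.

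The case $\tau=2$ is then essentially free. Because every degree is at most $4$, the quantity $\tau=\sum_{u\in V}\bigl(4-\delta(u)\bigr)$ is a sum of non-negative integers, and $v$ already contributes $4-2=2$; hence every other vertex, in particular $a$ and $b$, has degree exactly $4$. Thus in $\mathcal{M}^{\ast}$ all vertices have degree at least $3$, so $\mathcal{M}^{\ast}$ is a $4$-regular matchstick graph with $\tau=2$ and minimum degree $\ge 2$. By the definition of $\mathcal{M}_2$ we get $n(\mathcal{M}^{\ast})\ge n(\mathcal{M}_2)$, whence $n(\mathcal{M})=n(\mathcal{M}^{\ast})+1>n(\mathcal{M}_2)$.

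For $\tau=4$ the same counting shows that no vertex of $\mathcal{M}$ has degree $1$ (a degree-$1$ vertex would contribute $3$ on top of the $2$ contributed by $v$) and that the remaining deficiency $\tau-2=2$ is shared by the vertices other than $v$. If both $a$ and $b$ have degree at least $3$ in $\mathcal{M}$, then $\mathcal{M}^{\ast}$ again has minimum degree $\ge 2$ and the argument of the previous paragraph applies verbatim with $\mathcal{M}_4$ in place of $\mathcal{M}_2$. The genuine obstacle is the alternative: some neighbour, say $a$, has degree $2$, so $a$ becomes a pendant vertex of $\mathcal{M}^{\ast}$. But then $a$ alone absorbs all of the remaining deficiency $2$, forcing every vertex except $v$ and $a$ to have degree $4$; in particular $\delta(b)=4$. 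I would then delete the pendant vertex $a$ as well, removing one further vertex and the edge $ab$; this lowers $\tau$ by $2$ and turns $b$ into a degree-$2$ vertex while all remaining vertices keep degree $4$. The result $\mathcal{M}^{\ast\ast}$ is a $4$-regular matchstick graph with $\tau=2$, minimum degree $\ge 2$ and $n(\mathcal{M}^{\ast\ast})=n(\mathcal{M})-2$, so $n(\mathcal{M}^{\ast\ast})\ge n(\mathcal{M}_2)$; combined with the monotonicity~(\ref{eq_monotony}), $n(\mathcal{M}_2)\ge n(\mathcal{M}_4)$, this gives $n(\mathcal{M})=n(\mathcal{M}^{\ast\ast})+2>n(\mathcal{M}_4)$.

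The steps needing the most care are the $\tau$-bookkeeping under vertex deletion, which is cleanest through $\tau=4n-2|E|$ rather than the boundary definition, and the verification that each deletion keeps the object a legitimate matchstick graph, i.e.\ connected and with all low-degree vertices lying on the (enlarged) outer face. The real difficulty is the pendant-triangle configuration when $\tau=4$: deleting $v$ alone may create a vertex of degree $1$, so $\mathcal{M}^{\ast}$ cannot be compared directly with $\mathcal{M}_4$. The point that makes this harmless is the deficiency count above, which shows that a degree-$2$ neighbour forces every other vertex to have degree $4$; hence exactly one extra deletion repairs the graph and no cascade of degree-$1$ vertices can arise.
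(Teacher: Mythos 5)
Your proof is correct and takes essentially the same route as the paper, whose entire proof is ``Consider the matchstick graph arising from $\mathcal{M}$ after deleting vertex $v$'': the $\tau$-invariance of deleting a degree-$2$ vertex and the comparison of the resulting graph with $n\left(\mathcal{M}_{\tau(\mathcal{M})}\right)$ are exactly the intended argument. The pendant-triangle wrinkle for $\tau=4$, which you handle explicitly by a second deletion and the monotonicity $n\left(\mathcal{M}_2\right)\ge n\left(\mathcal{M}_4\right)$, is not spelled out in the paper because it is absorbed by its preceding lemma (a $\tau=4$ matchstick graph other than a single vertex has at most one degree-$1$ vertex and satisfies $n\ge n\left(\mathcal{M}_4\right)$), so your added case analysis is a completeness detail rather than a different approach.
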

\begin{proof}
  Consider the matchstick graph arising from $\mathcal{M}$ after deleting vertex $v$.
\end{proof}

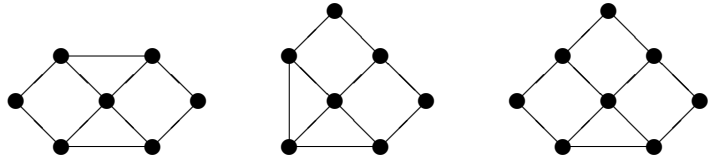
\begin{figure}[h]
  \begin{center}
    \setlength{\unitlength}{0.60cm}
    \begin{picture}(15,3)
      \put(1,0){\line(1,0){2}}
      \put(1,0){\line(1,1){1}}
      \put(3,0){\line(-1,1){1}}
      \put(2,1){\line(-1,1){1}}
      \put(2,1){\line(1,1){1}}
      \put(1,2){\line(1,0){2}}
      \put(1,0){\line(-1,1){1}}
      \put(0,1){\line(1,1){1}}
      \put(4,1){\line(-1,1){1}}
      \put(3,0){\line(1,1){1}}
      \put(1,0){\circle*{0.35}}
      \put(3,0){\circle*{0.35}}
      \put(1,2){\circle*{0.35}}
      \put(3,2){\circle*{0.35}}
      \put(2,1){\circle*{0.35}}
      \put(0,1){\circle*{0.35}}
      \put(4,1){\circle*{0.35}}
      \put(6,0){\line(0,1){2}}
      \put(6,0){\line(1,0){2}}
      \put(8,0){\line(-1,1){2}}
      \put(6,0){\line(1,1){1}}
      \put(8,0){\line(1,1){1}}
      \put(7,1){\line(1,1){1}}
      \put(6,2){\line(1,1){1}}
      \put(7,3){\line(1,-1){2}}
      \put(6,0){\circle*{0.35}}
      \put(6,2){\circle*{0.35}}
      \put(8,0){\circle*{0.35}}
      \put(7,1){\circle*{0.35}}
      \put(7,3){\circle*{0.35}}
      \put(8,2){\circle*{0.35}}
      \put(9,1){\circle*{0.35}}
      \put(12,0){\line(1,0){2}}
      \put(12,0){\line(-1,1){1}}
      \put(11,1){\line(1,1){1}}
      \put(14,0){\line(-1,1){2}}
      \put(12,0){\line(1,1){1}}
      \put(14,0){\line(1,1){1}}
      \put(13,1){\line(1,1){1}}
      \put(12,2){\line(1,1){1}}
      \put(13,3){\line(1,-1){2}}
      \put(11,1){\circle*{0.35}}
      \put(12,0){\circle*{0.35}}
      \put(12,2){\circle*{0.35}}
      \put(14,0){\circle*{0.35}}
      \put(13,1){\circle*{0.35}}
      \put(13,3){\circle*{0.35}}
      \put(14,2){\circle*{0.35}}
      \put(15,1){\circle*{0.35}}
    \end{picture}\\[2mm]
    \caption{Configurations of triangles and quadrangles.}
    \label{fig_configuration_1}
  \end{center}
\end{figure}

Some necessary criteria for matchstick graphs are stated in \cite{companion}. 
\begin{Lemma}\textbf{(\textit{Area argument})}
  \label{lemma_area_argument}
  $$
    \frac{\frac{k}{4}\cdot\cot\left(\frac{\pi}{k}\right)}{\frac{\sqrt{3}}{4}}\ge \sum\limits_{i=1}^\infty A_{2i+1}\,+\,2\cdot\text{ \# disjoint configurations as in
    Figure~\ref{fig_configuration_1}}
  $$
\end{Lemma}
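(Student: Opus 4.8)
The plan is to read the inequality as a comparison of areas, both normalised by the area $\frac{\sqrt{3}}{4}$ of a unit equilateral triangle. The numerator $\frac{k}{4}\cot\left(\frac{\pi}{k}\right)$ is precisely the area of a regular $k$-gon of unit side length, so the left-hand side is the ratio of that area to the area of a unit triangle. Assuming $\mathcal{M}$ is $2$-connected (which by Lemma~\ref{lemma_m_tau_two_connected} is the situation in which we apply the estimate), its outer face is bounded by a simple cycle through the $k$ vertices of $\mathcal{K}$, all of whose edges have length $1$. Hence the region $R$ enclosed by the boundary is a simple polygon of perimeter $k$, and the classical isoperimetric inequality for polygons with a prescribed number of edges gives $\mathrm{area}(R)\le\frac{k}{4}\cot\left(\frac{\pi}{k}\right)$, the regular $k$-gon being the area-maximiser among all $k$-gons of that perimeter. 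Since $\mathrm{area}(R)$ equals the sum of the areas of all bounded faces of $\mathcal{M}$, it remains to bound this sum from below.

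The first ingredient of the lower bound is the auxiliary claim that every bounded face with an odd number $m$ of edges has area at least $\frac{\sqrt{3}}{4}$, with equality only for the equilateral triangle. Writing its boundary as unit vectors $e_1,\dots,e_m$ with $\sum_j e_j=0$, the reason is a parity obstruction: were the face almost degenerate, all $e_j$ would be nearly parallel to a common line, and the cancellation $\sum_j e_j=0$ of their tangential components $\pm\sqrt{1-\langle e_j,n\rangle^2}$ would require the two sign classes to be equinumerous, which is impossible for odd $m$. Making this imbalance quantitative forces a definite transversal spread, hence a definite area, minimised by the triangle. Even faces contribute only a non-negative amount and may be discarded — except inside the configurations of Figure~\ref{fig_configuration_1}, treated next.

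The second ingredient accounts for those configurations. I would verify, by a purely local computation, that each of them carries an area exceeding by at least $2\cdot\frac{\sqrt{3}}{4}$ the amount already credited to its odd faces. For instance, in the leftmost configuration two equilateral triangles and two unit rhombi meet at an interior vertex $v$ of degree $4$; the triangles pin down the angle $2\cdot\frac{\pi}{3}$ at $v$, so the rhombi share the remaining angle $\theta_1+\theta_2=\frac{4\pi}{3}$ and have combined area $\sin\theta_1+\sin\theta_2=\sqrt{3}\,\cos\!\left(\frac{\theta_1-\theta_2}{2}\right)$. From $\theta_1+\theta_2=\frac{4\pi}{3}$ and $\theta_1,\theta_2\in(0,\pi)$ one gets $\lvert\theta_1-\theta_2\rvert<\frac{2\pi}{3}$, so this sum exceeds $\frac{\sqrt{3}}{2}=2\cdot\frac{\sqrt{3}}{4}$; thus the two rhombi supply the promised surplus on top of the $2\cdot\frac{\sqrt{3}}{4}$ attributed to the two triangles through the odd-face count. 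The remaining two configurations are handled by the same angle bookkeeping adapted to their shapes, and since the counted configurations are pairwise disjoint and faces have disjoint interiors, these surpluses add without double counting.

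Combining the two ingredients, $\mathrm{area}(R)$ is at least $\frac{\sqrt{3}}{4}$ times the number of bounded odd faces plus $2\cdot\frac{\sqrt{3}}{4}$ times the number $c$ of disjoint configurations, so that
\[
  \frac{\sqrt{3}}{4}\left(\sum_{i=1}^{\infty}A_{2i+1}+2c\right)\le\mathrm{area}(R)\le\frac{k}{4}\cot\left(\frac{\pi}{k}\right),
\]
and dividing by $\frac{\sqrt{3}}{4}$ yields the asserted bound. I expect the main obstacle to be the quantitative odd-face estimate: converting the clean parity heuristic into the sharp value $\frac{\sqrt{3}}{4}$ needs a careful extremal analysis rather than the rough bound the heuristic immediately gives. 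A secondary point requiring care is that the outer face is itself counted in $\sum_i A_{2i+1}$ yet contributes no area to $R$; when $k$ is odd one must therefore either interpret the sum over bounded faces or recover the missing $\frac{\sqrt{3}}{4}$ from the slack in the isoperimetric step.
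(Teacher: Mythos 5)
First, a point of reference: the paper itself contains no proof of Lemma~\ref{lemma_area_argument} --- it is quoted from the companion paper (``Some necessary criteria for matchstick graphs are stated in \cite{companion}''), so there is no in-paper argument to compare you against line by line. That said, your architecture is exactly the one this line of work uses (compare the area arguments in \cite{companion} and \cite{no_five_regular}): bound the region enclosed by the outer cycle from above by the regular $k$-gon via the polygonal isoperimetric inequality, and from below by $\frac{\sqrt{3}}{4}$ per odd face plus a surplus of $2\cdot\frac{\sqrt{3}}{4}$ from the rhombi of each configuration of Figure~\ref{fig_configuration_1}. Your local computation for the configurations is correct and essentially complete: triangular faces are equilateral, quadrangular faces are unit rhombi, the triangles pin the angles at the central degree-$4$ vertex, and $\sin\theta_1+\sin\theta_2=\sqrt{3}\cos\left(\frac{\theta_1-\theta_2}{2}\right)>\frac{\sqrt{3}}{2}$ when $\theta_1+\theta_2=\frac{4\pi}{3}$ with $\theta_i\in\left(\frac{\pi}{3},\pi\right)$; for the third configuration, concavity of $\sin$ on $(0,\pi)$ gives the (unattained) infimum $\frac{\sqrt{3}}{2}$ for three rhombus angles summing to $\frac{5\pi}{3}$. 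Disjointness of the counted configurations prevents double counting, as you say.

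There are, however, genuine gaps. (i) The heart of the lemma --- every simple polygon with an odd number of unit sides has area at least $\frac{\sqrt{3}}{4}$ --- is only motivated by your parity heuristic, which yields ``some definite positive area'' but not the constant $\frac{\sqrt{3}}{4}$; converting it into the sharp bound is precisely the nontrivial content (it is a standalone extremal lemma in this literature), and you concede rather than close this step, so the proof is incomplete exactly where it matters. (ii) The outer-face bookkeeping you flag at the end is not a cosmetic worry: the paper's $A_i$ explicitly counts the outer face, and taken literally the stated inequality then already fails for the single triangle ($k=3$ gives left-hand side $1$ but $A_3=2$), so proving the lemma ``as stated'' forces you either to reinterpret $\sum_i A_{2i+1}$ as running over bounded faces or to recover an extra $\frac{\sqrt{3}}{4}$ of isoperimetric slack whenever $k$ is odd --- you correctly identify this dichotomy but resolve neither branch. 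A smaller scope issue: you assume $2$-connectivity (citing Lemma~\ref{lemma_m_tau_two_connected}), while the lemma is applied in Lemma~\ref{lemma_first_tau_bounds} to arbitrary $4$-regular matchstick graphs; with a cut vertex the outer boundary is a closed walk rather than a simple $k$-cycle, and the isoperimetric step needs an additional (short, but absent) argument that the union of the enclosed polygons still has area at most $\frac{k}{4}\cot\left(\frac{\pi}{k}\right)$.
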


Combining Lemma~\ref{lemma_area_argument} and Lemma~\ref{lemma_A_i_sum} yields:
\begin{Corollary}
  \label{cor_k_bound_inner_vertices}
  For a $4$-regular matchstick graph $\mathcal{M}$ with $|\mathcal{I}|>0$ we have
  $$
    n\ge 5+k-\frac{\tau}{2}\quad\text{and}\quad
    \frac{A_\text{max}(k)}{A_\text{max}(3)}=\frac{\frac{k}{4}\cdot\cot\left(\frac{\pi}{k}\right)}{\frac{\sqrt{3}}{4}}\ge\frac{Area(\mathcal{M})}{\frac{\sqrt{3}}{4}} \ge 6+k-\tau.
  $$
\end{Corollary}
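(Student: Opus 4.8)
The plan is to treat the two displayed inequalities separately: the bound on $n$ by a pure face count, and the area chain by the isoperimetric inequality for polygons together with Lemma~\ref{lemma_area_argument}. For the first inequality I start from the identity $n=|F|-2+\frac{\tau}{2}$ of Lemma~\ref{lemma_A_i_sum}, which turns the claim $n\ge 5+k-\frac{\tau}{2}$ into $|F|\ge 7+k-\tau$. Subtracting Equation~(\ref{eq_A_i_sum}) from $|F|=\sum_{i\ge3}A_i$ rewrites this as
\[
  |F|=8-\tau+\sum_{i\ge4}(i-3)A_i,
\]
so that it suffices to prove $\sum_{i\ge4}(i-3)A_i\ge k-1$.

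The outer face is a $k$-gon and contributes $\max(k-3,0)$ to the left-hand side, so I need two further units for $k\ge4$, and two units in total for $k=3$; this is where the hypothesis $|\mathcal{I}|>0$ is used. Fix an inner vertex $v$. It has degree exactly $4$, its four incident faces are bounded (the outer face avoids $v$) and pairwise distinct, and four unit equilateral triangles cover only $4\cdot\frac{\pi}{3}<2\pi$ of the angle around $v$, so not all four incident faces are triangles. A short case analysis on the number $t$ of triangular sectors at $v$ then gives a contribution of at least $2$ to $\sum_{i\ge4}(i-3)A_i$: for $t\le2$ there are at least two incident faces with at least $4$ sides, while for $t=3$ the last sector has opening angle $\pi$, its two edges at $v$ are collinear and their far endpoints lie at distance $2$, so that face can be neither a triangle nor a quadrilateral and must have at least $5$ sides. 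Together with the outer-face term this yields $\sum_{i\ge4}(i-3)A_i\ge k-1$ and hence $n\ge 5+k-\frac{\tau}{2}$.

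For the area chain the first inequality is the isoperimetric inequality for unit-edge polygons: the boundary of $\mathcal{M}$ is a closed polygon with $k$ edges of length $1$, hence $Area(\mathcal{M})\le A_\text{max}(k)=\frac{k}{4}\cot\!\left(\frac{\pi}{k}\right)$, and dividing by $A_\text{max}(3)=\frac{\sqrt3}{4}$ gives the left-hand estimate. For the remaining inequality $\frac{Area(\mathcal{M})}{\sqrt3/4}\ge 6+k-\tau$ I bound the area from below face by face. Each bounded triangle contributes exactly $\frac{\sqrt3}{4}$, and by Equation~(\ref{eq_force_triangles}) there are at least $4+k-\tau$ triangles; the two remaining units are supplied by the non-triangular faces forced around $v$, whose area is controlled by Lemma~\ref{lemma_area_argument} and the configurations of Figure~\ref{fig_configuration_1}.

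I expect this last step to be the main obstacle. In the face count a thin quadrilateral still contributes a full unit to $\sum_{i\ge4}(i-3)A_i$, but it carries almost no area, so the naive estimate bounding $\frac{Area(\mathcal{M})}{\sqrt3/4}$ by the number of bounded faces fails. This is exactly what Lemma~\ref{lemma_area_argument} repairs: a quadrilateral of small area cannot occur in isolation but must be flanked by triangles as in Figure~\ref{fig_configuration_1}, and each such configuration is worth two area units. The delicate point is therefore to verify that the $+2$ which survives so easily in the face count also survives in the area count, i.e.\ that the non-triangular faces around $v$ either already have area at least $\frac{\sqrt3}{4}$ each or participate in a configuration that does.
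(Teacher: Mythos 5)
Your proof of the first inequality is correct and complete, and it fills in details the paper never writes down (the paper gives no proof of this corollary beyond the sentence that it follows by combining Lemma~\ref{lemma_area_argument} and Lemma~\ref{lemma_A_i_sum}). The reduction via $n=|F|-2+\frac{\tau}{2}$ and Equation~(\ref{eq_A_i_sum}) to $\sum_{i\ge 4}(i-3)A_i\ge k-1$, and the angle analysis at an inner vertex $v$ --- not all four faces triangles since $4\cdot\frac{\pi}{3}\ne 2\pi$, and $t=3$ forcing a straight angle at $v$, so that the fourth face cannot be a quadrilateral (a unit-sided quadrilateral is a rhombus, all of whose angles are less than $\pi$) and must have at least five sides --- is exactly the right local argument. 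One small point you assert rather than prove is that the four faces at $v$ are pairwise distinct; a face occupying two opposite sectors at $v$ would have a boundary walk through $v$ twice and hence at least six sides (adjacent sectors would force a bridge, which the paper excludes), so your count survives, but this deserves a line.

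The second inequality, however, is not proven: you stop precisely where the content of the statement lies, and the dichotomy you propose to verify --- that each non-triangular face at $v$ either has area at least $\tfrac{\sqrt{3}}{4}$ or participates in a configuration of Figure~\ref{fig_configuration_1} --- is in fact false. If $v$ is surrounded by four quadrangles ($t=0$), all four are rhombi whose angles at $v$ sum to $2\pi$, and they can simultaneously be made arbitrarily thin (angles $\varepsilon,\pi-\varepsilon,\varepsilon,\pi-\varepsilon$), while none of the configurations of Figure~\ref{fig_configuration_1} --- each of which contains a triangle --- occurs at $v$. Indeed no purely local argument at a single inner vertex can close the gap: the $2\times 2$ sheared rhombus grid (nine vertices, one inner vertex, $k=8$, $\tau=12$) is a legitimate $4$-regular matchstick graph with $|\mathcal{I}|>0$ whose area tends to $0$, so that $\frac{Area(\mathcal{M})}{\sqrt{3}/4}\ge 6+k-\tau=2$ fails outright in that regime; the area bound is only viable for the small values of $\tau$ for which the paper invokes it, and there the missing $+2$ must come from the quantitative content of Lemma~\ref{lemma_area_argument} --- that around an inner vertex the forced mixtures of triangles and rhombi (the cases $t\in\{1,2\}$, which are exactly Figure~\ref{fig_configuration_1}, with the two rhombus angles at $v$ summing to at least $\frac{4\pi}{3}$, whence their areas sum to at least $2\cdot\frac{\sqrt{3}}{4}$) contribute two full triangle-units, while a face with $i\ge 5$ sides contributes $(i-4)$ extra triangles through Equation~(\ref{eq_A_i_sum}) plus one more if $i$ is odd. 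Your proposal names this obstacle honestly but does not overcome it, and as written the route through per-face area lower bounds cannot be completed.
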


\section{Lower bounds on $\mathbf{n\left(\mathcal{M}_\tau\right)}$ for $\mathbf{\tau\le 4}$}

\noindent
If $\tau<6$ then $\mathcal{M}$ is either a single vertex or contains inner vertices.

\begin{Lemma}
  \label{lemma_first_tau_bounds}
  Let $\mathcal{M}$ be a $4$-regular matchstick graph, then we have
  the following bounds on its parameters
  \begin{eqnarray*}
     k\ge 11,\, A_3\ge 15,\, \frac{Area(\mathcal{M})}{\frac{\sqrt{3}}{4}}\ge 17,\,n\ge 16
     &\text{for }\tau=0,\\
     k\ge 9,\, A_3\ge 11,\, \frac{Area(\mathcal{M})}{\frac{\sqrt{3}}{4}}\ge 13,\, n\ge 13
     &\text{for }\tau=2,\\
     n= 1\text{ or }k\ge 8,\, A_3\ge 8,
     \frac{Area(\mathcal{M})}{\frac{\sqrt{3}}{4}}\ge 10, n\ge 11 &\text{for }\tau=4.\\
  \end{eqnarray*}
\end{Lemma}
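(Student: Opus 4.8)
The plan is to reduce each of the three groups of bounds to a single lower bound on $k$, and then to read off $A_3$, the area, and $n$ from that. Since $\tau\le 4<6$, the remark preceding the statement guarantees that $\mathcal{M}$ is either a single vertex — the exceptional case $n=1$ listed for $\tau=4$ — or satisfies $|\mathcal{I}|>0$. In the latter case Corollary~\ref{cor_k_bound_inner_vertices} applies, and its area half can be written as
\[
  g(k):=\frac{\frac{k}{4}\cot\!\left(\tfrac{\pi}{k}\right)}{\frac{\sqrt3}{4}}\ \ge\ 6+k-\tau .
\]
First I would note that $g(k)-k$ increases with $k$, so it suffices to locate the smallest integer $k$ satisfying this inequality. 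Evaluating $g$ at small integers yields $g(7)-7<2\le g(8)-8$, $g(8)-8<4\le g(9)-9$ and $g(9)-9<6\le g(10)-10$, which gives $k\ge 8$ for $\tau=4$, $k\ge 9$ for $\tau=2$, and (only) $k\ge 10$ for $\tau=0$.

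Once $k$ is fixed the remaining inequalities are automatic. The triangle bound is (\ref{eq_force_triangles}), namely $A_3\ge 4+k-\tau$; the area bound is the right-hand inequality of Corollary~\ref{cor_k_bound_inner_vertices}, $\frac{Area(\mathcal{M})}{\sqrt3/4}\ge 6+k-\tau$; and the vertex bound is its left-hand inequality, $n\ge 5+k-\frac{\tau}{2}$. Substituting $k\ge 9$ for $\tau=2$ gives $A_3\ge 11$, $\frac{Area(\mathcal{M})}{\sqrt3/4}\ge 13$ and $n\ge 13$; substituting $k\ge 8$ for $\tau=4$ gives $A_3\ge 8$, $\frac{Area(\mathcal{M})}{\sqrt3/4}\ge 10$ and $n\ge 11$; and the three numbers $15,17,16$ claimed for $\tau=0$ will follow at once from $k\ge 11$.

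So everything comes down to sharpening $k\ge 10$ to $k\ge 11$ when $\tau=0$, and this is the step I expect to be the main obstacle. The plain area inequality is powerless here, since $g(10)\approx 17.77$ exceeds the required $16$ with room to spare. My approach is to invoke the refined count of Lemma~\ref{lemma_area_argument}. For $\tau=0$ and $k=10$, equation~(\ref{eq_A_i_sum}) together with the outer $10$-gon (which contributes $6$ to the sum) forces $A_3=8+\sum_{i\ge 5}(i-4)A_i\ge 14$, so the estimate $g(10)\ge\sum_{i\ge 1}A_{2i+1}+2\cdot(\#\text{ configurations of Figure~\ref{fig_configuration_1}})$ leaves a budget of only $g(10)-14<4$ for all further odd faces together with twice the number of those configurations. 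The hard part will be to show that a matchstick graph whose outer boundary is a necessarily near-regular unit decagon cannot respect this budget: the large interior angles forced around such a boundary should compel either extra odd faces or enough triangle–quadrangle configurations to push the right-hand side to $18>g(10)$, the desired contradiction.

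Assuming this exclusion, I would finish by carrying out the steps in the order (i) the single-vertex case for $\tau=4$; (ii) the $k$-bounds for $\tau=2,4$ from $g$; (iii) the dependent inequalities for $\tau=2,4$; (iv) the decagon exclusion giving $k\ge 11$ for $\tau=0$; and (v) the dependent inequalities for $\tau=0$.
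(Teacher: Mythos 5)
Your scaffolding coincides exactly with the paper's: the single-vertex exception for $\tau=4$, the bounds $k\ge 8$, $k\ge 9$, $k\ge 10$ obtained from Corollary~\ref{cor_k_bound_inner_vertices}, and the derivation of the $A_3$-, area- and $n$-bounds from Inequality~(\ref{eq_force_triangles}) and Corollary~\ref{cor_k_bound_inner_vertices} once $k$ is bounded; your numerical evaluations are all correct. But the step you single out as ``the main obstacle'' and then explicitly assume --- the exclusion of $\tau=0$, $k=10$ --- is precisely where the paper spends essentially its entire proof, so as written your proposal has a genuine gap rather than a complete argument. You even reproduce the right budget arithmetic ($A_3\ge 14$ from Equation~(\ref{eq_A_i_sum}) with the outer $10$-gon contributing $6$, leaving less than $4$ for further odd faces plus twice the configuration count), but the mechanism you gesture at (geometric constraints from a ``necessarily near-regular unit decagon'') is not how the exclusion works --- nothing forces the boundary to be near-regular, and no metric property of the decagon is used.

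The paper's actual argument is a purely combinatorial count of angle slots. First the admissible face spectra are enumerated: only $A_3=14$, $A_4$ arbitrary, $A_{10}=1$; or $A_3=15$, $A_4$ arbitrary, $A_5=1$, $A_{10}=1$; or $A_3=16$, $A_4$ arbitrary, $A_6=1$, $A_{10}=1$ survive Corollary~\ref{cor_k_bound_inner_vertices}, Equation~(\ref{eq_A_i_sum}) and the area argument. Each outer vertex has degree $4$, so its interior angle splits into three slots; since the interior angles of the outer face sum to $(10-2)\pi$ and each triangle angle is exactly $\frac{\pi}{3}$, at most $3\cdot 10-7=23$ of these slots can lie in triangles. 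Analogously at most $3s-3$ of the slots around an inner $s$-gon lie in triangles, and one configuration of Figure~\ref{fig_configuration_1} absorbs at most $15$ triangle angles. Hence $15\cdot 3-23-(3\cdot 5-3)=10>0$ and $16\cdot 3-23-(3\cdot 6-3)=10>0$ force at least one configuration of Figure~\ref{fig_configuration_1} in the second and third cases, while $14\cdot 3-23-15=4>0$ forces at least two in the first; in every case $\sum_{i\ge 1}A_{2i+1}+2\cdot(\#\text{configurations})\ge 18$, which exceeds $\frac{10\cot\left(\frac{\pi}{10}\right)}{\sqrt{3}}\approx 17.77$ and contradicts Lemma~\ref{lemma_area_argument}. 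Supplying this slot-counting argument (or an equivalent) is what your proof still needs; everything else in your plan is sound.
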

\begin{proof}
  The case that $\mathcal{M}$ equals a single vertex is only possible for $\tau=4$. Otherwise we have $k\ge 3$ and $\mathcal{M}$ contains at least one inner vertex. From 
  Corollary~\ref{cor_k_bound_inner_vertices} and Lemma~\ref{lemma_area_argument} we deduce the bounds $k\ge 10$ for $\tau=0$,
  $k\ge 9$ for $\tau=2$, and $k\ge 8$ for $\tau=4$. In an extra consideration we conclude a contradiction from $\tau=0$,
  $k=10$ so that we have $k\ge 11$ in this case. Inserting this into Inequality~(\ref{eq_force_triangles}) and
  Corollary~\ref{cor_k_bound_inner_vertices} yields the remaining bounds.
  
  Let us assume $\tau(\mathcal{M})=0$ and $k(\mathcal{M})=10$. Due to Corollary~\ref{cor_k_bound_inner_vertices},
  Equation~(\ref{eq_A_i_sum}), and the area argument we have the following possible non-zero values for the $A_i$:
  \begin{enumerate}
    \item[(1)] $A_3=14$, $A_4=t\in\mathbb{N}$, $A_{10}=1$,
    \item[(2)] $A_3=15$, $A_4=t\in\mathbb{N}$, $A_5=1$, $A_{10}=1$, or
    \item[(3)] $A_3=16$, $A_4=t\in\mathbb{N}$, $A_6=1$, $A_{10}=1$.
  \end{enumerate}
  Due to an angle sum of $(10-2)\pi$ at most $3\cdot 10-7=23$ of the inner angles of the outer face can be part
  of triangles. In a similar manner we conclude that at most $3s-3$ outer angles of an inner $s$-gon can be part
  of triangles. Since $15\cdot 3-23 -(3\cdot 5-3)=10>0$ and $16\cdot 3-23 -(3\cdot 6-3)=10>0$ one of the
  configurations of Figure~\ref{fig_configuration_1} must exist in cases (2) and (3), which is a contradiction to the area argument.
  
  The number of outer and inner angles of one of the configurations in Figure~\ref{fig_configuration_1}, which can be part
  of triangles, is at most $15$. Thus we can conclude from $14\cdot 3-23-1\cdot 15=4>0$  that in case (1) 
  at least two subgraphs as in Figure~\ref{fig_configuration_1} must exist, which contradicts the area argument.
\end{proof}

\begin{figure}[htp]
  \begin{center}
    \setlength{\unitlength}{0.6cm}
    \begin{picture}(4,2.5)
      \qbezier[20](1,0)(2,0)(3,0)
      \put(1.2,-0.4){$x$}
      \put(2.5,-0.4){$y$}
      \put(0,0.5){\line(0,1){1}}
      \put(1,0){\line(0,1){2}}
      \put(3,0){\line(0,1){2}}
      \put(4,0.5){\line(0,1){1}}
      \put(0,0.5){\line(2,1){1}}
      \put(0,1.5){\line(2,1){1}}
      \put(0,1.5){\line(2,-1){1}}
      \put(0,0.5){\line(2,-1){1}}
      \put(1,2){\line(1,0){2}}
      \put(1,2){\line(1,-2){0.5}}
      \put(2,2){\line(1,-2){0.5}}
      \put(2,2){\line(-1,-2){0.5}}
      \put(3,2){\line(-1,-2){0.5}}
      \put(1.5,1){\line(1,0){1}}
      \put(3,2){\line(2,-1){1}}
      \put(3,1){\line(2,-1){1}}
      \put(3,1){\line(2,1){1}}
      \put(3,0){\line(2,1){1}}
      \put(1.5,1){\line(1,-1){0.5}}
      \put(2.5,1){\line(-1,-1){0.5}}
      \put(2,0.5){\line(-2,-1){1}}
      \put(2,0.5){\line(2,-1){1}}
      \put(2,0.5){\circle*{0.35}}
      \put(1,0){\circle*{0.35}}
      \put(1,1){\circle*{0.35}}
      \put(1,2){\circle*{0.35}}
      \put(0,0.5){\circle*{0.35}}
      \put(0,1.5){\circle*{0.35}}
      \put(1.5,1){\circle*{0.35}}
      \put(2,2){\circle*{0.35}}
      \put(2.5,1){\circle*{0.35}}
      \put(3,2){\circle*{0.35}}
      \put(3,1){\circle*{0.35}}
      \put(3,0){\circle*{0.35}}
      \put(4,0.5){\circle*{0.35}}
      \put(4,1.5){\circle*{0.35}}
    \end{picture}
    \quad\quad\quad\quad
    \begin{picture}(3,2.5)
      \put(1.2,-0.4){$x$}
      \put(1.2,2.2){$y$}
      \put(0,0.5){\line(0,1){1}}
      \put(1,0){\line(0,1){2}}
      \put(0,0.5){\line(2,1){1}}
      \put(0,1.5){\line(2,1){1}}
      \put(0,1.5){\line(2,-1){1}}
      \put(0,0.5){\line(2,-1){1}}
      \put(1,2){\line(1,0){2}}
      \put(1,0){\line(1,0){2}}
      \put(1,2){\line(1,-1){0.5}}
      \put(2,2){\line(1,-1){0.5}}
      \put(2,2){\line(-1,-1){0.5}}
      \put(3,2){\line(-1,-1){0.5}}
      \put(1,0){\line(1,1){0.5}}
      \put(2,0){\line(1,1){0.5}}
      \put(2,0){\line(-1,1){0.5}}
      \put(3,0){\line(-1,1){0.5}}
      \put(1.5,1.5){\line(1,0){1}}
      \put(1.5,0.5){\line(1,0){1}}
      \put(1.5,0.5){\line(0,1){1}}
      \put(3,0){\line(0,1){2}}
      \put(3,1){\line(-1,-1){0.5}}
      \put(3,1){\line(-1,1){0.5}} 
      \put(3,1){\circle*{0.35}}
      \put(1.5,1.5){\circle*{0.35}}
      \put(2.5,1.5){\circle*{0.35}}
      \put(1.5,0.5){\circle*{0.35}}
      \put(2.5,0.5){\circle*{0.35}}
      \put(1,0){\circle*{0.35}}
      \put(1,1){\circle{0.35}}
      \put(2,2){\circle*{0.35}}
      \put(3,2){\circle*{0.35}}
      \put(2,0){\circle*{0.35}}
      \put(3,0){\circle*{0.35}}
      \put(1,2){\circle*{0.35}}
      \put(0,0.5){\circle{0.35}}
      \put(0,1.5){\circle{0.35}}
    \end{picture}\\[1mm]
    \caption{Examples of non unit distance graphs.}
    \label{fig_example_4}
  \end{center}
\end{figure}

To obtain sharper lower bounds for $n\left(\mathcal{M}_2\right)$ and $n\left(\mathcal{M}_4\right)$ we designed a recursive algorithm to exhaustively generate $4$-regular matchstick graphs. Starting from a triangle in each iteration a face is added. Those planar graphs which do not pass the necessary criteria from \cite{companion} or Lemma~\ref{lemma_smallest_example} are removed. For $n\le 16$ there only remain the five planar graphs from Figures \ref{fig_example_4}, \ref{fig_example_5}, and \ref{fig_example_6}.

\begin{figure}[htp]
  \begin{center}
    \setlength{\unitlength}{0.6cm}
    \begin{picture}(5.5,2.5)
      \put(0,1){\line(1,0){2}}
      \put(3,1){\line(1,0){2}}
      \put(0.5,0){\line(1,0){4}}
      \put(0.5,0){\line(-1,2){0.5}}
      \put(1.5,0){\line(-1,2){0.5}}
      \put(2.5,0){\line(-1,2){0.5}}
      \put(3.5,0){\line(-1,2){0.5}}
      \put(4.5,0){\line(-1,2){0.5}}
      \put(0.5,0){\line(1,2){0.5}}
      \put(1.5,0){\line(1,2){0.5}}
      \put(2.5,0){\line(1,2){0.5}}
      \put(3.5,0){\line(1,2){0.5}}
      \put(4.5,0){\line(1,2){0.5}}
      \put(2,2){\line(0,-1){1}}
      \put(2,2){\line(1,-1){1}}
      \put(2,2){\line(-2,-1){2}}
      \put(1,1.5){\line(-1,0){1}}
      \put(0,2){\line(1,0){2}}
      \qbezier[50](0,2)(5,3.5)(5,1)
      \put(-0.55,1.9){$x$}
      \put(5.2,0.9){$y$}
      \put(1,1.5){\line(0,1){0.5}}
      \put(0,1.5){\line(0,-1){0.5}}
      \put(0,1.5){\line(0,1){0.5}}
      \put(0,1.5){\line(2,1){1}}
      \put(1,1.5){\circle*{0.35}}
      \put(0,1.5){\circle*{0.35}}
      \put(1,2){\circle*{0.35}}
      \put(0,2){\circle*{0.35}}
      \put(0,1){\circle*{0.35}}
      \put(1,1){\circle*{0.35}}
      \put(2,1){\circle*{0.35}}
      \put(3,1){\circle*{0.35}}
      \put(4,1){\circle*{0.35}}
      \put(5,1){\circle*{0.35}}
      \put(0.5,0){\circle*{0.35}}
      \put(1.5,0){\circle*{0.35}}
      \put(2.5,0){\circle*{0.35}}
      \put(3.5,0){\circle*{0.35}}
      \put(4.5,0){\circle*{0.35}}
      \put(2,2){\circle*{0.35}}
    \end{picture}
    \quad\quad\quad\quad
    \begin{picture}(5.5,2.5)
      \put(0,1){\line(1,0){2}}
      \put(3,1){\line(1,0){2}}
      \put(0.5,0){\line(1,0){4}}
      \put(0.5,0){\line(-1,2){0.5}}
      \put(1.5,0){\line(-1,2){0.5}}
      \put(2.5,0){\line(-1,2){0.5}}
      \put(3.5,0){\line(-1,2){0.5}}
      \put(4.5,0){\line(-1,2){0.5}}
      \put(0.5,0){\line(1,2){0.5}}
      \put(1.5,0){\line(1,2){0.5}}
      \put(2.5,0){\line(1,2){0.5}}
      \put(3.5,0){\line(1,2){0.5}}
      \put(4.5,0){\line(1,2){0.5}}
      \put(2,2){\line(0,-1){1}}
      \put(2,2){\line(1,-1){1}}
      \put(2,2){\line(-2,-1){2}}
      \put(1,1.5){\line(-1,0){1}}
      \put(1,2){\line(1,0){1}}
      \qbezier[50](1,2)(5,3.5)(5,1)
      \put(0.45,1.95){$x$}
      \put(5.2,0.9){$y$}
      \put(1,1.5){\line(0,1){0.5}}
      \put(0,1.5){\line(0,-1){0.5}}
      \put(0,1.5){\line(2,1){1}}
      \put(1,1.5){\circle*{0.35}}
      \put(0,1.5){\circle*{0.35}}
      \put(1,2){\circle*{0.35}}
      \put(0,1){\circle*{0.35}}
      \put(1,1){\circle*{0.35}}
      \put(2,1){\circle*{0.35}}
      \put(3,1){\circle*{0.35}}
      \put(4,1){\circle*{0.35}}
      \put(5,1){\circle*{0.35}}
      \put(0.5,0){\circle*{0.35}}
      \put(1.5,0){\circle*{0.35}}
      \put(2.5,0){\circle*{0.35}}
      \put(3.5,0){\circle*{0.35}}
      \put(4.5,0){\circle*{0.35}}
      \put(2,2){\circle*{0.35}}
    \end{picture}
    \caption{Examples of non unit distance graphs.}
    \label{fig_example_5}
  \end{center}
\end{figure}

\begin{Lemma}
  None of the planar graphs from Figures \ref{fig_example_4}, \ref{fig_example_5}, and \ref{fig_example_6} is a matchstick graph.
\end{Lemma}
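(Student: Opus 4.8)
The plan is to treat each of the five plane graphs separately and to rule out a unit-distance embedding with the prescribed face structure by a direct coordinate computation. The decisive structural feature, visible in all of Figures~\ref{fig_example_4}, \ref{fig_example_5}, and \ref{fig_example_6}, is that the graphs are built almost entirely from triangular faces glued edge-to-edge. In a unit-distance embedding a triangle is rigid: its three vertices form an equilateral triangle of side $1$ and are thus determined up to an isometry. Hence, after fixing a base edge at $(0,0)$ and $(1,0)$, the equilateral constraint propagates forced coordinates from triangle to triangle, and the only ambiguity at each gluing is a reflection (the apex of the newly attached triangle lies on one of the two sides of the shared edge). The requirements that the edges be non-crossing and that the indicated faces be genuine faces of the embedding pin down which side, so that essentially every vertex receives an explicit coordinate in $\mathbb{Q}[\sqrt{3}]$.

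Concretely, for each graph I would proceed in three steps: first, choose a base edge, orient the rigid triangle-strip, and compute coordinates vertex by vertex, carrying along the finitely many reflection choices that are not immediately excluded by planarity; second, read off the marked pair $x,y$ together with the dashed arc, which records the part of the outer boundary that still has to be completed in order to obtain a $4$-regular matchstick graph with the claimed parameters; and third, evaluate the metric constraint that this completion imposes. In the configurations of Figure~\ref{fig_example_5} (and the relevant one of Figure~\ref{fig_example_4}) this constraint is that $x$ and $y$ must be joinable by the remaining unit edges of the outer face without a crossing; in the others it is that two forced vertices must be at unit distance. Since all coordinates are explicit algebraic numbers, each such constraint reduces to checking an exact identity such as $\|x-y\| = 1$.

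The contradiction in every case is then one of the following: the forced distance $d(x,y)$ is provably different from $1$ (an equality in $\mathbb{Q}[\sqrt{3}]$ that fails), so the required unit edge cannot exist; or the forced position of some vertex lies inside an already-placed triangle, or on the wrong side of an already-placed edge, so the outstanding faces cannot be inserted without violating the non-crossing condition; or completing the boundary would raise the degree of an interior vertex above $4$. Because the reflection choices leave only finitely many coordinate assignments to inspect, exhaustively checking all of them settles each of the five graphs.

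I expect the main obstacle to be the bookkeeping of the reflection (and any residual one-parameter) freedom rather than any single hard estimate. Where a gluing is not rigidly forced, I would be left with a short interval of admissible angles and must show that throughout it either some edge length stays away from $1$ or a crossing is unavoidable; doing this rigorously calls for exact arithmetic over $\mathbb{Q}[\sqrt{3}]$ or a small interval certificate rather than floating point. It is also essential to invoke non-crossing actively: without it the underlying abstract graphs would typically admit unit-distance representations, so the argument must at some point use planarity of the straight-line embedding in order to close.
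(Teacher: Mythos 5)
Your proposal is correct and takes essentially the same route as the paper: compute the (up to symmetry) unique coordinates of the rigid subgraphs and verify that the forced distance between $x$ and $y$ differs from the required value (which is $1$ for the dotted-edge graphs, and $2$ for the filled-circle graph on the right of Figure~\ref{fig_example_4}), while for the one genuinely flexible configuration (Figure~\ref{fig_example_6}) the paper does exactly what you anticipate for residual one-parameter freedom, parametrizing by angles $\alpha,\beta$ with $\alpha+\beta=\frac{\pi}{3}$ and showing the distance constraint is attained only in the degenerate limit $\alpha=\beta=0$. The one small caveat is your claim that all coordinates lie in $\mathbb{Q}[\sqrt{3}]$ --- the quadrangles in these graphs can introduce other algebraic irrationalities even when the overall embedding is rigid --- but this does not affect the method, since exact algebraic arithmetic suffices, as you also note.
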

\begin{proof}
  If we determine the (up to symmetries) unique coordinates of the planar graph without the dotted edge 
  on the left hand side in Figure~\ref{fig_example_4}, then the distance between vertices $x$ and $y$ is not equal to $1$.
  In the graph given by the filled circles on the right hand side of Figure~\ref{fig_example_4} we can determine unique
  coordinates and conclude that the distance between the vertices $x$ and $y$ is not equal to $2$.
  
  If we determine the unique coordinates of the two planar graphs without the dotted edge in Figure~\ref{fig_example_5},
  then in both case the distance between vertex $x$ and $y$ is not equal to $1$.
  
  By considering the angles in the planar graph of Figure~\ref{fig_example_5} we deduce $\alpha+\beta=\frac{\pi}{3}$.
  With these two angles the squared distance between vertices $x$ and $y$ is given by 
  $4-4\sqrt{3}\cdot\cos\left(\frac{\alpha-\beta}{2}\right)$.
  The minimum distance, which equals $1$, is attained for $\alpha=0$ and $\beta=0$.
\end{proof}

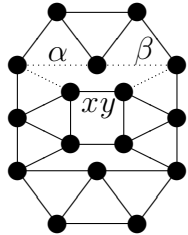
\begin{figure}[htp]
  \begin{center}
    \setlength{\unitlength}{0.7cm}
    \begin{picture}(5.5,4.5)
      \put(0.57,3.07){$\alpha$}
      \put(2.20,3.12){$\beta$}
      \put(1.2,2.12){$x$}
      \put(1.55,2.12){$y$}
      \put(0,3){\line(3,4){0.75}}
      \put(1.5,3){\line(3,4){0.75}}
      \put(1.5,3){\line(-3,4){0.75}}
      \put(3,3){\line(-3,4){0.75}}
      \put(0,1){\line(3,-4){0.75}}
      \put(1.5,1){\line(3,-4){0.75}}
      \put(1.5,1){\line(-3,-4){0.75}}
      \put(3,1){\line(-3,-4){0.75}}
      \put(0,1){\line(2,1){1}}
      \put(0,2){\line(2,1){1}}
      \put(0,2){\line(2,-1){1}}
      \qbezier[15](0,3)(0.5,2.75)(1,2.5)
      \put(3,1){\line(-2,1){1}}
      \put(3,2){\line(-2,1){1}}
      \put(3,2){\line(-2,-1){1}}
      \qbezier[15](3,3)(2.5,2.75)(2,2.5)
      \put(0.75,0){\line(1,0){1.5}}
      \put(0.75,4){\line(1,0){1.5}}
      \put(0,1){\line(1,0){3}}
      \put(0,1){\line(0,1){2}}
      \put(3,1){\line(0,1){2}}
      \put(1,1.5){\line(1,0){1}}
      \put(1,2.5){\line(1,0){1}}
      \put(1,1.5){\line(0,1){1}}
      \put(2,1.5){\line(0,1){1}}
      \qbezier[30](0,3)(1.5,3)(3,3)
      %
      \put(0.75,0){\circle*{0.35}}
      \put(2.25,0){\circle*{0.35}}
      \put(0.75,4){\circle*{0.35}}
      \put(2.25,4){\circle*{0.35}}
      \put(0,1){\circle*{0.35}}
      \put(1.5,1){\circle*{0.35}}
      \put(3,1){\circle*{0.35}}
      \put(1,1.5){\circle*{0.35}}
      \put(2,1.5){\circle*{0.35}}
      \put(1,2.5){\circle*{0.35}}
      \put(2,2.5){\circle*{0.35}}
      \put(0,2){\circle*{0.35}}   
      \put(0,3){\circle*{0.35}}
      \put(3,2){\circle*{0.35}}   
      \put(3,3){\circle*{0.35}}
      \put(1.5,3){\circle*{0.35}}
    \end{picture}
    \caption{Example of a non unit distance graph.}
    \label{fig_example_6}
  \end{center}
\end{figure}

\begin{Corollary}
  \label{col_M_tau}
  $
    n\left(\mathcal{M}_2\right)\ge n\left(\mathcal{M}_4\right)\ge 17
  $.
\end{Corollary}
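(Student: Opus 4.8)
The inequality $n(\mathcal{M}_2)\ge n(\mathcal{M}_4)$ is part of the chain (\ref{eq_monotony}), so the whole content of the corollary is the bound $n(\mathcal{M}_4)\ge 17$. The plan is to turn this into a finite case distinction and then to settle it with the two ingredients prepared immediately above, namely the exhaustive enumeration and the non-realizability lemma. By (\ref{eq_monotony}) it suffices to show that no $4$-regular matchstick graph with $\tau\in\{2,4\}$ and minimum degree $2$ has $n\le 16$: the non-existence of such a graph for $\tau=4$ already yields $n(\mathcal{M}_4)\ge 17$ (the single vertex is excluded since it has degree $0$), and $n(\mathcal{M}_2)\ge n(\mathcal{M}_4)$ then lifts this to both inequalities. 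Lemma \ref{lemma_first_tau_bounds} moreover confines the search to the finite window $11\le n\le 16$ for $\tau=4$ and $13\le n\le 16$ for $\tau=2$.

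Within this window I would invoke the recursive generation procedure: constructing every $4$-regular matchstick graph face by face from an initial triangle and pruning with the necessary criteria of \cite{companion} together with Lemma \ref{lemma_smallest_example} leaves precisely the five planar graphs of Figures \ref{fig_example_4}, \ref{fig_example_5}, and \ref{fig_example_6}. This list is complete, i.e. every matchstick graph with the stated parameters and $n\le 16$ would have to occur among these five, because passing the necessary criteria is, by definition, a necessary condition for being a matchstick graph. The preceding lemma, however, shows that none of the five planar graphs admits a unit-distance straight-line embedding, so none of them is in fact a matchstick graph. Hence no matchstick graph with $\tau\in\{2,4\}$, minimum degree $2$, and $n\le 16$ exists, and the corollary follows.

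The main obstacle, and the place where all the substantive work resides, is the soundness and completeness of the enumeration step: one must be certain that the face-by-face construction reaches every combinatorial type of $4$-regular matchstick graph of the relevant size and value of $\tau$ without omission, and that the necessary criteria never discard a genuine matchstick graph, so that the five survivors really are all the candidates. By contrast the elimination of the five candidates, carried out in the preceding lemma, is comparatively routine: in each case the unit-length edges determine the vertex coordinates up to congruence and force the distinguished pair of vertices to a distance different from the required one. The only case needing a genuine angle argument rather than a direct coordinate computation is Figure \ref{fig_example_6}, where the two free angles satisfy $\alpha+\beta=\tfrac{\pi}{3}$ and the required unit edge can be realized only in a degenerate configuration, so that no admissible embedding exists.
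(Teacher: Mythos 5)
Your proposal is correct and is essentially the paper's own argument: reduce via the monotonicity chain~(\ref{eq_monotony}) to showing $n\left(\mathcal{M}_4\right)\ge 17$, enumerate all candidates with $n\le 16$ by the recursive face-by-face generation pruned by the criteria of \cite{companion} and Lemma~\ref{lemma_smallest_example}, and eliminate the five surviving planar graphs of Figures \ref{fig_example_4}, \ref{fig_example_5}, and \ref{fig_example_6} by the preceding lemma. The one imprecision is your assertion that passing the pruning criteria is a necessary condition for being a matchstick graph: Lemma~\ref{lemma_smallest_example} certifies only non-minimality (a genuine matchstick graph with a degree-$2$ outer vertex adjacent to an inner triangle may be discarded), so the enumeration is complete merely for candidates for the minimum examples $\mathcal{M}_2$ and $\mathcal{M}_4$ --- which, since the corollary bounds exactly these minima, is all that the argument requires.
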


With the unit distance graph in Figure~\ref{fig_example_7}  we have $17\le n\left(\mathcal{M}_4\right)\le 20$. Maybe the determination of $n\left(\mathcal{M}_4\right)$ and $n\left(\mathcal{M}_2\right)$ is an interesting  problem of its own.

\begin{figure}[htp]
  \begin{center}
    \setlength{\unitlength}{0.6cm}
    \begin{picture}(5.5,4.5)
      \put(2.5,0){\line(-1,1){0.5}}
      \put(2,0.5){\line(-2,1){2}}
      \put(2.5,0){\line(1,1){0.5}}
      \put(3,0.5){\line(2,1){2}}
      \put(2.5,3){\line(-1,-1){0.5}}
      \put(2,2.5){\line(-2,-1){2}}
      \put(2.5,3){\line(1,-1){0.5}}
      \put(3,2.5){\line(2,-1){2}}
      \put(2,0.5){\line(0,1){2}}
      \put(3,0.5){\line(0,1){2}}
      \put(0,0.5){\line(0,1){2}}
      \put(5,0.5){\line(0,1){2}}
      \put(1,0){\line(1,0){3}}
      \put(1,3){\line(1,0){3}}
      \put(0,0.5){\line(2,1){1}}
      \put(0,0.5){\line(2,-1){1}}
      \put(0,2.5){\line(2,1){1}}
      \put(0,2.5){\line(2,-1){1}}
      \put(5,0.5){\line(-2,1){1}}
      \put(5,0.5){\line(-2,-1){1}}
      \put(5,2.5){\line(-2,1){1}}
      \put(5,2.5){\line(-2,-1){1}}
      \put(1,0){\line(0,1){1}}
      \put(1,2){\line(0,1){1}}
      \put(4,0){\line(0,1){1}}
      \put(4,2){\line(0,1){1}}
      \put(1,0){\line(2,1){1}}
      \put(1,3){\line(2,-1){1}}
      \put(4,0){\line(-2,1){1}}
      \put(4,3){\line(-2,-1){1}}
      \put(0,0.5){\circle*{0.35}}
      \put(0,1.5){\circle*{0.35}}
      \put(1,0){\circle*{0.35}}
      \put(1,1){\circle*{0.35}}
      \put(2,0.5){\circle*{0.35}}
      \put(2.5,0){\circle*{0.35}}
      \put(5,0.5){\circle*{0.35}}
      \put(5,1.5){\circle*{0.35}}
      \put(4,0){\circle*{0.35}}
      \put(4,1){\circle*{0.35}}
      \put(3,0.5){\circle*{0.35}}
      \put(0,2.5){\circle*{0.35}}
      \put(1,3){\circle*{0.35}}
      \put(1,2){\circle*{0.35}}
      \put(2,2.5){\circle*{0.35}}
      \put(2.5,3){\circle*{0.35}}
      \put(5,2.5){\circle*{0.35}}
      \put(4,3){\circle*{0.35}}
      \put(4,2){\circle*{0.35}}
      \put(3,2.5){\circle*{0.35}}
    \end{picture}
    \caption{Example for $\tau=4$ consisting of $20$ vertices.}
    \label{fig_example_7}
  \end{center}
\end{figure}
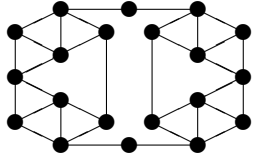

\bigskip

If $n\left(\mathcal{M}_0\right)<34$ then $\mathcal{M}_0$ has to be $3$-connected due to Lemma~\ref{lemma_not_3_connected}. This possibility is excluded by \cite{companion} so that we can conclude:

\begin{Theorem}
  \label{main_theorem}
  If $\mathcal{M}$ is a $4$-regular matchstick graph with $\tau=0$, then it has at least \lb~vertices.
\end{Theorem}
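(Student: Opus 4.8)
The plan is to argue by contradiction: suppose $\mathcal{M}$ is a $4$-regular matchstick graph with $\tau=0$ and $n(\mathcal{M})\le 33$. Since $\tau = 4k-\sum_{v\in\mathcal{K}}\delta(v)$ and every outer vertex has degree at most $4$, the hypothesis $\tau=0$ forces every outer vertex to have degree exactly $4$ as well; hence $\mathcal{M}$ is a genuine $4$-regular planar unit distance graph (a \emph{complete} matchstick graph in the terminology of the excerpt). I would then split the argument according to the vertex connectivity of $\mathcal{M}$, exactly the dichotomy flagged in the sentence preceding the statement.

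In the first case, $\mathcal{M}$ is not $3$-connected. Here I would apply Lemma~\ref{lemma_not_3_connected}, which gives $n(\mathcal{M})\ge\min\bigl(2\cdot n(\mathcal{M}_2),\,2\cdot n(\mathcal{M}_4)+2\bigr)$. Feeding in the bounds $n(\mathcal{M}_2)\ge n(\mathcal{M}_4)\ge 17$ from Corollary~\ref{col_M_tau}, both arguments of the minimum are at least $34$ (indeed $2\cdot 17 = 34$ and $2\cdot 17+2 = 36$), so $n(\mathcal{M})\ge 34$, contradicting $n(\mathcal{M})\le 33$. In the second case, $\mathcal{M}$ is $3$-connected, and there is nothing left to settle by hand: I would invoke the exhaustive computer search of the companion paper~\cite{companion}, where \texttt{plantri} was used to enumerate all $3$-connected planar $4$-regular graphs on at most $33$ vertices and to verify that none of them is a unit distance graph in the plane. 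This rules out a $3$-connected example with $n(\mathcal{M})\le 33$. Since both cases force $n(\mathcal{M})\ge 34$, the theorem follows.

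The genuinely hard part of the overall argument is not this final deduction, which is a short two-case split, but rather the two ingredients it rests upon. The first is the lower bound $n(\mathcal{M}_4)\ge 17$ of Corollary~\ref{col_M_tau}, which is assembled from the area argument (Lemma~\ref{lemma_area_argument}), the angle-counting and extra-consideration arguments in Lemma~\ref{lemma_first_tau_bounds}, and the recursive generation that eliminates the small candidate graphs of Figures~\ref{fig_example_4}--\ref{fig_example_6}. The second is the large-scale \texttt{plantri} enumeration in~\cite{companion} handling the $3$-connected case. The contribution of the theorem is precisely to combine the structural reduction for non-$3$-connected graphs with these two bounds, so I would keep the proof itself deliberately brief and let it lean on the earlier machinery.
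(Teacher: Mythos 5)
Your proposal is correct and matches the paper's own proof exactly: the paper likewise combines Lemma~\ref{lemma_not_3_connected} with Corollary~\ref{col_M_tau} to get $n(\mathcal{M})\ge\min(2\cdot 17,\,2\cdot 17+2)=34$ in the non-$3$-connected case, and cites the \texttt{plantri} enumeration of~\cite{companion} to exclude $3$-connected examples on at most $33$ vertices. Your observation that $\tau=0$ forces all outer vertices to have degree $4$ is also a correct (and slightly more explicit) justification of the completeness assumption the paper takes for granted.
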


\section{Complete classification of $\mathbf{4}$-regular matchstick graphs for small $\mathbf{k}$}

\noindent
Another source of candidates for planar unit distance graphs arises if one tries to classify all $4$-regular matchstick graphs $\mathcal{M}$ with small cardinality $k(\mathcal{M})$ of its outer face. Using the described methods it is not too hard to classify these graphs for $k\le 6$.

\begin{Lemma}
  The complete list of $4$-regular matchstick graphs with $k\in\{3,4\}$ is given by\\[-1.2cm]
  \begin{center}
    \setlength{\unitlength}{0.4cm}
    \begin{picture}(10.4,3.9)
      \put(0.2,0.2){\circle*{0.45}}
      \put(0.2,2.2){\circle*{0.45}}
      \put(1.2,1.2){\circle*{0.45}}
      \put(0.2,0.2){\line(1,1){1}}
      \put(0.2,2.2){\line(1,-1){1}}
      \put(0.2,0.2){\line(0,1){2}}
      \put(2.2,0.2){\circle*{0.45}}
      \put(2.2,2.2){\circle*{0.45}}
      \put(4.2,0.2){\circle*{0.45}}
      \put(4.2,2.2){\circle*{0.45}}
      \put(2.2,0.2){\line(1,0){2}}
      \put(2.2,0.2){\line(0,1){2}}
      \put(4.2,2.2){\line(-1,0){2}}
      \put(4.2,2.2){\line(0,-1){2}}
      \put(2.2,0.2){\line(1,1){2}}
      \put(5.2,0.2){\circle*{0.45}}
      \put(5.2,2.2){\circle*{0.45}}
      \put(7.2,0.2){\circle*{0.45}}
      \put(7.2,2.2){\circle*{0.45}}
      \put(5.2,0.2){\line(1,0){2}}
      \put(5.2,0.2){\line(0,1){2}}
      \put(7.2,2.2){\line(-1,0){2}}
      \put(7.2,2.2){\line(0,-1){2}}
      \put(8.2,0.2){\circle*{0.45}}
      \put(8.2,2.2){\circle*{0.45}}
      \put(10.2,0.2){\circle*{0.45}}
      \put(10.2,2.2){\circle*{0.45}}
      \put(9.2,1.2){\circle*{0.45}}
      \put(8.2,0.2){\line(1,1){2}}
      \put(8.2,2.2){\line(1,-1){2}}
      \put(8.2,0.2){\line(0,1){2}}
      \put(10.2,0.2){\line(0,1){2}}
    \end{picture}
  \end{center}
\end{Lemma}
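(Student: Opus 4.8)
The plan is to pin down the feasible parameter combinations for $k\in\{3,4\}$ from the inequalities already at our disposal, and only afterwards to exhibit the embeddings.

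First I would bound $\tau$. By Lemma~\ref{lemma_first_tau_bounds} any $4$-regular matchstick graph that is not a single vertex and has $\tau\le 4$ already satisfies $k\ge 8$; hence $k\in\{3,4\}$ forces $\tau\ge 6$. Since a matchstick graph has no bridge, every outer vertex has degree at least $2$, so $\tau=4k-\sum_{v\in\mathcal K}\delta(v)\le 4k-2k=2k$. As $\tau$ is even, this leaves only $\tau=6$ for $k=3$ and $\tau\in\{6,8\}$ for $k=4$.

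Next I would rule out interior vertices. If $|\mathcal I|>0$, Corollary~\ref{cor_k_bound_inner_vertices} gives $\frac{A_{\max}(k)}{A_{\max}(3)}\ge 6+k-\tau$, whereas the left-hand side equals $1$ for $k=3$ and $\frac{4}{\sqrt 3}<\frac{7}{3}$ for $k=4$. For $(k,\tau)=(3,6)$ this reads $1\ge 3$ and for $(k,\tau)=(4,6)$ it reads $\frac{4}{\sqrt 3}\ge 4$, both impossible; so in these cases $\mathcal I=\emptyset$. The only pair the area estimate does not dispatch by itself is $(k,\tau)=(4,8)$, where all four boundary vertices have degree $2$. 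A degree-$4$ interior vertex would have to be adjacent to all four of them, leaving each boundary vertex only one further edge; those edges pair the boundary vertices into two triangles through the central vertex, which then lies on the outer face after all, contradicting its being interior. Hence $\mathcal I=\emptyset$ here too.

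With $\mathcal I=\emptyset$ every vertex lies on the outer cycle, and the only remaining freedom is a choice of pairwise non-crossing unit-length chords respecting $\delta\le 4$ and Equation~(\ref{eq_A_i_sum}). For $k=3$ the boundary is a unit equilateral triangle with no room for a chord. For $k=4$, the value $\tau=8$ admits no chord and yields the unit rhombus $C_4$, while $\tau=6$ forces exactly one diagonal; a unit diagonal splits the rhombus into two equilateral triangles glued along an edge, and a second diagonal is barred since it would cross the first. A direct coordinate computation confirms that each surviving type is realizable, giving precisely the graphs in the statement.

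The principal obstacle is exhaustiveness rather than any individual computation. The area inequality by itself leaves $(k,\tau)=(4,8)$ undecided, and more generally an even-gon face can be made arbitrarily thin, so the area budget does not cap the number of faces; non-$2$-connected configurations, where a single cut vertex on the outer face may carry two triangles, must also be inspected. To guarantee completeness I would fall back on the recursive generator described above: start from a triangle, attach one bounded face per step, and prune with the necessary criteria of \cite{companion} together with Lemma~\ref{lemma_smallest_example}. For $k\le 4$ this search is minute and terminates with exactly the listed graphs, corroborating the hand argument.
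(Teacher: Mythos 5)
Your overall route --- forcing $\tau\ge 6$ from Lemma~\ref{lemma_first_tau_bounds} and parity, bounding $\tau\le 2k$ via minimum degree $2$, killing interior vertices with Corollary~\ref{cor_k_bound_inner_vertices}, then enumerating non-crossing unit chords --- is a genuine hand proof where the paper offers none (the paper simply asserts that the classification for $k\le 6$ ``is not too hard'' using its exhaustive generation machinery). However, as written your proof does not establish the stated lemma, because the statement lists \emph{four} graphs and your case analysis produces \emph{three}: the triangle, the chordless rhombus, and the rhombus with one diagonal. The fourth listed graph is two unit triangles sharing a vertex (the square-with-centre picture cannot be realized with unit edges, since the half-diagonals $a,b$ of a unit rhombus satisfy $a^2+b^2=1$ and so cannot both equal $1$; its matchstick embedding is the bowtie), and your own $(k,\tau)=(4,8)$ paragraph explicitly discards exactly this configuration on the grounds that the central vertex lies on the outer face. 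That observation is correct under the paper's definition of $\mathcal{K}(\mathcal{M})$: the shared vertex is situated on the outer face, so the bowtie has $k=5$ and $\tau=4\cdot 5-12=8$ and never enters a $k\in\{3,4\}$ analysis at all. (The paper evidently places it in this lemma because its subsequent $k=5$ and $k=6$ classifications are restricted to $2$-connected graphs.) Your closing claim that the analysis yields ``precisely the graphs in the statement'' is therefore false as written: you must either state and adopt the convention under which this graph counts as $k=4$, or handle outer boundaries that are non-simple closed walks. On the latter point the missing argument is short: a doubly visited outer vertex has degree $4$ (no bridges, so the two passages use distinct edges), whence $k=4$, $\tau=8$ would force $\sum_{v\in\mathcal{K}}\delta(v)\ge 4+3\cdot 2=10>8$, and $k=4$, $\tau=6$ would force the degree sequence $(4,2,2,2)$, which admits no simple graph with two blocks at the cut vertex.

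Two smaller defects. In the $(k,\tau)=(4,8)$ case the sentence ``a degree-$4$ interior vertex would have to be adjacent to all four of them'' is a non sequitur, since an interior vertex could a priori be adjacent to other interior vertices; the clean argument is that $\tau=8$ forces every outer vertex to have degree exactly $2$, so a simple outer $4$-cycle has no edge toward the interior and any interior vertex would disconnect the graph. And your fallback ``corroboration'' by the recursive generator is unsound for exactly one of the graphs you need: that generator starts from a triangle and attaches one bounded face per step, so it can never produce the chordless rhombus, whose only bounded face is a quadrangle; it cannot serve as a completeness guarantee for this classification without modifying the seed.
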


\begin{Lemma}
  The complete list of $2$-connected $4$-regular matchstick graphs with $k=5$ is given by\\[-1.2cm]
  \begin{center}
    \setlength{\unitlength}{0.6cm}
    \begin{picture}(10.4,2.4)
      \put(0.2,0.2){\circle*{0.3}}
      \put(0.2,1.2){\circle*{0.3}}
      \put(1.2,0.2){\circle*{0.3}}
      \put(1.2,1.2){\circle*{0.3}}
      \put(2.2,0.7){\circle*{0.3}}
      \put(0.2,0.2){\line(1,0){1}}
      \put(0.2,1.2){\line(1,0){1}}
      \put(0.2,0.2){\line(0,1){1}}
      \put(1.2,0.2){\line(2,1){1}}
      \put(1.2,1.2){\line(2,-1){1}}
      \put(3.2,0.2){\circle*{0.3}}
      \put(3.2,1.2){\circle*{0.3}}
      \put(4.2,0.2){\circle*{0.3}}
      \put(4.2,1.2){\circle*{0.3}}
      \put(5.2,0.7){\circle*{0.3}}
      \put(3.2,0.2){\line(1,0){1}}
      \put(3.2,1.2){\line(1,0){1}}
      \put(3.2,0.2){\line(0,1){1}}
      \put(4.2,0.2){\line(2,1){1}}
      \put(4.2,1.2){\line(2,-1){1}}
      \put(4.2,0.2){\line(0,1){1}}
      \put(6.2,0.2){\circle*{0.3}}
      \put(6.2,1.2){\circle*{0.3}}
      \put(7.2,0.2){\circle*{0.3}}
      \put(7.2,1.2){\circle*{0.3}}
      \put(8.2,0.7){\circle*{0.3}}
      \put(6.2,0.2){\line(1,0){1}}
      \put(6.2,1.2){\line(1,0){1}}
      \put(6.2,0.2){\line(0,1){1}}
      \put(7.2,0.2){\line(2,1){1}}
      \put(7.2,1.2){\line(2,-1){1}}
      \put(7.2,0.2){\line(0,1){1}}
      \put(7.2,0.2){\line(-1,1){1}}
    \end{picture}
  \end{center}
\end{Lemma}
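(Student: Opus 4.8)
The plan is to reduce the statement to a short finite enumeration by first fixing the gross shape of such a graph and then eliminating all interior vertices. Since $\mathcal{M}$ is $2$-connected, every face boundary is a cycle; in particular the outer face is a cycle through all $k=5$ boundary vertices, so it is a pentagon and $A_5\ge 1$. Thus $\mathcal{M}$ consists of this boundary pentagon together with some non-crossing interior edges. Writing $D$ for the number of chords of the pentagon and $E_{IO}$ for the number of edges between $\mathcal{I}$ and $\mathcal{K}$, the definition $\tau=4k-\sum_{v\in\mathcal{K}}\delta(v)$ reads $2D+E_{IO}=10-\tau$; as every vertex degree lies in $\{2,3,4\}$, the parameter $\tau$ is even and lies in $\{0,2,\dots,10\}$.

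The decisive step is to show $\mathcal{I}=\emptyset$. Suppose not. Then Corollary~\ref{cor_k_bound_inner_vertices} gives $\frac{A_{\max}(5)}{A_{\max}(3)}\ge 6+k-\tau=11-\tau$, and since $\frac{A_{\max}(5)}{A_{\max}(3)}=\frac{5\cot(\pi/5)}{\sqrt{3}}<4$ this forces $\tau\ge 8$. Hence $2D+E_{IO}=10-\tau\le 2$ is even, and connectivity requires $E_{IO}\ge 1$, so necessarily $E_{IO}=2$ and $D=0$; moreover the two interior edges must have distinct endpoints on the pentagon, for otherwise that common endpoint would be a cut vertex. Deleting these two edges leaves a $4$-regular matchstick graph $\mathcal{C}$ on the interior vertices with total degree deficiency $2$, i.e. $\tau(\mathcal{C})=2$, and (a split into two components would force each to have odd $\tau=1$) $\mathcal{C}$ is a single component, so $\mathcal{C}$ is of type $\mathcal{M}_2$. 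By Lemma~\ref{lemma_first_tau_bounds} its area satisfies $\frac{Area(\mathcal{C})}{\sqrt{3}/4}\ge 13$, whereas $\mathcal{C}$ is contained in the outer pentagon, whose area is at most $A_{\max}(5)=\tfrac{5}{4}\cot\tfrac{\pi}{5}$, giving $\frac{Area(\mathcal{C})}{\sqrt{3}/4}\le\frac{A_{\max}(5)}{A_{\max}(3)}<4$, a contradiction. Hence $\mathcal{I}=\emptyset$ and $n=5$.

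With no interior vertices, $\mathcal{M}$ is exactly the pentagon plus $D$ non-crossing unit chords, so $\tau=10-2D$. A pentagon admits at most two non-crossing diagonals, and any two of its non-crossing diagonals share a vertex; therefore, up to the symmetry of the boundary cycle, there is a unique graph for each value $D\in\{0,1,2\}$: the bare cycle $C_5$ ($\tau=10$, $A_3=0$), the pentagon with a single chord ($\tau=8$, $A_3=1$), and the pentagon triangulated by a fan of two chords at one vertex ($\tau=6$, $A_3=3$); these triangle counts are precisely the minimal values allowed by $A_3\ge 4+k-\tau$ in Lemma~\ref{lemma_A_i_sum}. It then remains to certify that each of the three candidates is genuinely realizable with unit edges: $C_5$ by a regular pentagon, the one-chord graph by gluing a unit equilateral triangle to a unit rhombus along the chord, and the fan by arranging three unit equilateral triangles about the degree-$4$ apex, whose two boundary edges become collinear because the interior angle there is $3\cdot\tfrac{\pi}{3}=\pi$. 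These are exactly the three pictured graphs.

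The main obstacle is the elimination of interior vertices, which is what gives the statement content; I expect the cleanest route to be the area comparison above, pitting the $\mathcal{M}_2$ area bound of Lemma~\ref{lemma_first_tau_bounds} against the small pentagon area $A_{\max}(5)<1.73$, rather than a direct face count, since individual quadrilateral faces can be arbitrarily thin and carry little area. The realizability checks are routine here precisely because the surviving graphs are so small, in contrast to the genuinely non-embeddable configurations of Figures~\ref{fig_example_4}--\ref{fig_example_6}; equivalently, one may simply run the recursive face-adding enumeration already described, prune with the necessary criteria and the bounds of this section, and observe that only these three graphs survive.
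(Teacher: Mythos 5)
Your proof is correct, but it does not follow the paper's route, because the paper gives no written proof of this lemma at all: the classification for $k\le 6$ is justified only by the remark that ``using the described methods it is not too hard to classify these graphs'', i.e.\ by the recursive face-adding enumeration pruned with the necessary criteria of \cite{companion} and Lemma~\ref{lemma_smallest_example}. What you do differently is to replace that machine search by an explicit two-step argument: first you eliminate inner vertices by playing Corollary~\ref{cor_k_bound_inner_vertices} (since $\frac{5\cot(\pi/5)}{\sqrt{3}}\approx 3.97<4$, any inner vertex forces $\tau\ge 8$, hence no chords and exactly two boundary--interior edges) against the $\tau=2$ area bound of Lemma~\ref{lemma_first_tau_bounds}: the resulting interior component $\mathcal{C}$ would need area at least $13\cdot\frac{\sqrt{3}}{4}$ while sitting inside an equilateral unit pentagon of area less than $4\cdot\frac{\sqrt{3}}{4}$; then the residual problem is the trivial enumeration of non-crossing unit chords of a pentagon ($D\in\{0,1,2\}$, any two non-crossing diagonals share a vertex), plus routine realizability checks, including the legitimate degenerate angle $\pi$ in the two-chord fan. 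Two steps you leave implicit do hold and deserve a sentence: the two deficiency-carrying endpoints of the deleted edges lie on the outer face of $\mathcal{C}$ (the deleted edges provide a crossing-free escape to the unbounded region), which is what makes $\tau(\mathcal{C})=2$ under the paper's definition; and your parenthetical parity argument for connectedness of $\mathcal{C}$ is valid because connectivity of $\mathcal{M}$ forces each hypothetical component to absorb exactly one of the two deleted edges, giving odd $\tau=1$. As for what each approach buys: yours is a transparent, computation-free, human-checkable proof that pinpoints \emph{why} no inner vertices can occur at $k=5$; the paper's uniform enumeration, by contrast, extends unchanged to $k=6$ and (with effort, see \cite{dipl_achim}) to $k=7$, where inner vertices genuinely occur and an area comparison of your type can no longer close the case by itself.
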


\begin{Lemma}
  The complete list of $2$-connected $4$-regular matchstick graphs with $k=6$ is given by
  \begin{center}
    \setlength{\unitlength}{0.6cm}
    \begin{picture}(15.4,5.4)
      \put(1.2,1.2){\circle*{0.3}}
      \put(1.2,0.2){\circle*{0.3}}
      \put(2.2,1.2){\circle*{0.3}}
      \put(2.2,0.2){\circle*{0.3}}
      \put(0.2,0.7){\circle*{0.3}}
      \put(3.2,0.7){\circle*{0.3}}
      \put(1.2,0.2){\line(1,0){1}}
      \put(1.2,1.2){\line(1,0){1}}
      \put(2.2,0.2){\line(2,1){1}}
      \put(2.2,1.2){\line(2,-1){1}}
      \put(0.2,0.7){\line(2,1){1}}
      \put(0.2,0.7){\line(2,-1){1}}
      \put(5.2,1.2){\circle*{0.3}}
      \put(5.2,0.2){\circle*{0.3}}
      \put(5.2,0.2){\line(0,1){1}}
      \put(6.2,1.2){\circle*{0.3}}
      \put(6.2,0.2){\circle*{0.3}}
      \put(4.2,0.7){\circle*{0.3}}
      \put(7.2,0.7){\circle*{0.3}}
      \put(5.2,0.2){\line(1,0){1}}
      \put(5.2,1.2){\line(1,0){1}}
      \put(6.2,0.2){\line(2,1){1}}
      \put(6.2,1.2){\line(2,-1){1}}
      \put(4.2,0.7){\line(2,1){1}}
      \put(4.2,0.7){\line(2,-1){1}}
      \put(9.2,1.2){\circle*{0.3}}
      \put(9.2,0.2){\circle*{0.3}}
      \put(10.2,1.2){\circle*{0.3}}
      \put(10.2,0.2){\circle*{0.3}}
      \put(10.2,0.2){\line(-1,1){1}}
      \put(8.2,0.7){\circle*{0.3}}
      \put(11.2,0.7){\circle*{0.3}}
      \put(9.2,0.2){\line(1,0){1}}
      \put(9.2,1.2){\line(1,0){1}}
      \put(10.2,0.2){\line(2,1){1}}
      \put(10.2,1.2){\line(2,-1){1}}
      \put(8.2,0.7){\line(2,1){1}}
      \put(8.2,0.7){\line(2,-1){1}}
      \put(13.2,1.2){\circle*{0.3}}
      \put(13.2,0.2){\circle*{0.3}}
      \put(14.2,0.2){\line(0,1){1}}
      \put(13.2,0.2){\line(0,1){1}}
      \put(14.2,1.2){\circle*{0.3}}
      \put(14.2,0.2){\circle*{0.3}}
      \put(12.2,0.7){\circle*{0.3}}
      \put(15.2,0.7){\circle*{0.3}}
      \put(13.2,0.2){\line(1,0){1}}
      \put(13.2,1.2){\line(1,0){1}}
      \put(14.2,0.2){\line(2,1){1}}
      \put(14.2,1.2){\line(2,-1){1}}
      \put(12.2,0.7){\line(2,1){1}}
      \put(12.2,0.7){\line(2,-1){1}}
      \put(1.2,3.2){\circle*{0.3}}
      \put(1.2,2.2){\circle*{0.3}}
      \put(1.2,2.2){\line(0,1){1}}
      \put(1.2,2.2){\line(1,1){1}}
      \put(2.2,3.2){\circle*{0.3}}
      \put(2.2,2.2){\circle*{0.3}}
      \put(0.2,2.7){\circle*{0.3}}
      \put(3.2,2.7){\circle*{0.3}}
      \put(1.2,2.2){\line(1,0){1}}
      \put(1.2,3.2){\line(1,0){1}}
      \put(2.2,2.2){\line(2,1){1}}
      \put(2.2,3.2){\line(2,-1){1}}
      \put(0.2,2.7){\line(2,1){1}}
      \put(0.2,2.7){\line(2,-1){1}}
      \put(5.2,3.2){\circle*{0.3}}
      \put(5.2,2.2){\circle*{0.3}}
      \put(5.2,2.2){\line(0,1){1}}
      \put(5.2,2.2){\line(1,1){1}}
      \put(5.2,2.2){\line(4,1){2}}
      \put(6.2,3.2){\circle*{0.3}}
      \put(6.2,2.2){\circle*{0.3}}
      \put(4.2,2.7){\circle*{0.3}}
      \put(7.2,2.7){\circle*{0.3}}
      \put(5.2,2.2){\line(1,0){1}}
      \put(5.2,3.2){\line(1,0){1}}
      \put(6.2,2.2){\line(2,1){1}}
      \put(6.2,3.2){\line(2,-1){1}}
      \put(4.2,2.7){\line(2,1){1}}
      \put(4.2,2.7){\line(2,-1){1}}
      \put(9.2,3.2){\circle*{0.3}}
      \put(9.2,2.2){\circle*{0.3}}
      \put(9.2,2.2){\line(0,1){1}}
      \put(9.2,2.2){\line(1,1){1}}
      \put(10.2,2.2){\line(0,1){1}}
      \put(10.2,3.2){\circle*{0.3}}
      \put(10.2,2.2){\circle*{0.3}}
      \put(8.2,2.7){\circle*{0.3}}
      \put(11.2,2.7){\circle*{0.3}}
      \put(9.2,2.2){\line(1,0){1}}
      \put(9.2,3.2){\line(1,0){1}}
      \put(10.2,2.2){\line(2,1){1}}
      \put(10.2,3.2){\line(2,-1){1}}
      \put(8.2,2.7){\line(2,1){1}}
      \put(8.2,2.7){\line(2,-1){1}}
      \put(13.2,3.2){\circle*{0.3}}
      \put(13.2,2.2){\circle*{0.3}}
      \put(13.7,2.7){\circle*{0.3}}
      \put(13.7,2.7){\line(-1,1){0.5}}
      \put(13.7,2.7){\line(-1,-1){0.5}}
      \put(13.6,2.7){\line(1-,1){0.5}}
      \put(13.7,2.7){\line(-1,0){1.5}}
      \put(14.2,3.2){\circle*{0.3}}
      \put(14.2,2.2){\circle*{0.3}}
      \put(12.2,2.7){\circle*{0.3}}
      \put(15.2,2.7){\circle*{0.3}}
      \put(13.2,2.2){\line(1,0){1}}
      \put(13.2,3.2){\line(1,0){1}}
      \put(14.2,2.2){\line(2,1){1}}
      \put(14.2,3.2){\line(2,-1){1}}
      \put(12.2,2.7){\line(2,1){1}}
      \put(12.2,2.7){\line(2,-1){1}}
      \put(1.2,5.2){\circle*{0.3}}
      \put(1.2,4.2){\circle*{0.3}}
      \put(1.7,4.7){\circle*{0.3}}
      \put(1.7,4.7){\line(-1,1){0.5}}
      \put(1.7,4.7){\line(-1,-1){0.5}}
      \put(0.2,4.7){\line(1,0){3}}
      \put(2.2,5.2){\circle*{0.3}}
      \put(2.2,4.2){\circle*{0.3}}
      \put(0.2,4.7){\circle*{0.3}}
      \put(3.2,4.7){\circle*{0.3}}
      \put(1.2,4.2){\line(1,0){1}}
      \put(1.2,5.2){\line(1,0){1}}
      \put(2.2,4.2){\line(2,1){1}}
      \put(2.2,5.2){\line(2,-1){1}}
      \put(0.2,4.7){\line(2,1){1}}
      \put(0.2,4.7){\line(2,-1){1}}
      \put(5.2,5.2){\circle*{0.3}}
      \put(5.2,4.2){\circle*{0.3}}
      \put(5.7,4.7){\circle*{0.3}}
      \put(5.7,4.7){\line(-1,-1){0.5}}
      \put(5.7,4.7){\line(1,1){0.5}}
      \put(4.2,4.7){\line(1,0){3}}
      \put(6.2,5.2){\circle*{0.3}}
      \put(6.2,4.2){\circle*{0.3}}
      \put(4.2,4.7){\circle*{0.3}}
      \put(7.2,4.7){\circle*{0.3}}
      \put(5.2,4.2){\line(1,0){1}}
      \put(5.2,5.2){\line(1,0){1}}
      \put(6.2,4.2){\line(2,1){1}}
      \put(6.2,5.2){\line(2,-1){1}}
      \put(4.2,4.7){\line(2,1){1}}
      \put(4.2,4.7){\line(2,-1){1}}
    \end{picture}
  \end{center}
\end{Lemma}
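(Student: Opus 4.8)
First I would exploit the $2$-connectivity hypothesis: in a $2$-connected plane graph every face, including the outer one, is bounded by a simple cycle, so the outer face of such an $\mathcal{M}$ is a $6$-cycle, every boundary vertex carries two boundary edges and hence has degree in $\{2,3,4\}$, and every interior vertex has degree exactly $4$. Counting the outer face, this already gives $A_6\ge1$.

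Next I would pin down the admissible combinatorial profiles from Lemma~\ref{lemma_A_i_sum}. Writing $m=|\mathcal{I}|$, the listed identities combine to $m+\frac{\tau}{2}=\sum_{i\ge4}(i-3)A_i=A_4+2A_5+3A_6+\cdots$. When $m>0$, Corollary~\ref{cor_k_bound_inner_vertices} forces $\tau\ge6$ through $6=\frac{A_{\max}(6)}{A_{\max}(3)}\ge 6+k-\tau=12-\tau$ and also $n\ge 11-\frac{\tau}{2}$, while Lemma~\ref{lemma_area_argument} caps the odd faces by $A_3+A_5+\cdots\le6$. Together these constraints leave only finitely many face vectors $(A_3,A_4,A_5,\dots)$: a short triangulated list for $m=0$, which is $C_6$ together with the graphs obtained by inserting non-crossing unit-length chords into the hexagon, and a handful of profiles carrying a single interior vertex of degree $4$.

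I would then realise this finite list by the recursive face-adding generation described earlier, specialised to $k=6$: start from a triangle, repeatedly attach a face, and retain only the planar graphs whose parameters match the admissible profiles and which survive the necessary criteria of \cite{companion} and Lemma~\ref{lemma_smallest_example}. For each of the finitely many survivors I would decide unit-distance realisability exactly as in the proof for Figures~\ref{fig_example_4}--\ref{fig_example_6}: fix the coordinates up to rigid motions and the finitely many triangle-orientation choices, solve the resulting system of unit-length equations, and verify that a non-crossing embedding with the prescribed outer hexagon actually exists. The ten displayed graphs are precisely those for which this system is consistent and planar; every other skeleton is discarded because its distance equations force a non-unit length or an edge crossing.

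The main obstacle is certifying finiteness, i.e.\ bounding $m$. Pure face counting does not suffice, since $m+\frac{\tau}{2}=\sum_{i\ge4}(i-3)A_i$ together with $\tau\le12$ permits many quadrilateral faces, and arbitrarily thin rhombi keep the enclosed area below the hexagonal bound $A_{\max}(6)$, so the area argument alone cannot cap $A_4$. Ruling out $m\ge2$ therefore leans on the sharper rigidity and angle criteria of \cite{companion}, which forbid the thin quadrilateral configurations that a boundary cycle of length $6$ cannot accommodate, in combination with the explicit non-realisability computations. Performing these for every candidate, and certifying that no additional unit-distance embedding escapes the check, is the delicate part of the argument.
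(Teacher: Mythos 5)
Your overall pipeline---extracting the admissible face vectors from Lemma~\ref{lemma_A_i_sum}, Lemma~\ref{lemma_area_argument} and Corollary~\ref{cor_k_bound_inner_vertices}, then running the recursive face-adding generation filtered by the criteria of \cite{companion} and Lemma~\ref{lemma_smallest_example}, and finally deciding realisability by explicit coordinate computations---is exactly the method the paper invokes for this lemma (it offers no proof beyond ``using the described methods''), and your counting steps check out: for $k=6$ one indeed has $m+\frac{\tau}{2}=\sum_{i\ge4}(i-3)A_i$, the corollary forces $\tau\ge6$ when $m>0$, the area argument gives $A_3+A_5+\cdots\le6$, and Equation~(\ref{eq_A_i_sum}) with $\tau\le12$ (from $2$-connectivity) even yields $A_5+2A_6+3A_7+\cdots=A_3-8+\tau\le10$, so every $A_i$ with $i\ne4$ is bounded by an absolute constant.

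The genuine gap is the step you yourself flag as delicate: bounding $A_4$ (equivalently $m$), i.e.\ certifying that the enumeration is finite. Your resolution---an appeal to the ``sharper rigidity and angle criteria'' of \cite{companion} forbidding thin quadrilateral configurations---does not do this work: those criteria are necessary conditions tested on a fixed candidate graph, so they prune individual nodes of the search tree, but nothing in your argument shows that every sufficiently large candidate is pruned, and hence termination remains unproven. The paper's own tool for exactly this point sits in the same section and you did not use it: by Lemma~\ref{lemma_1}, each quadrangle component with $q_i$ quadrangles forces $2\lceil 2\sqrt{q_i}\,\rceil$ incidences with non-quadrangular faces (via the extremal-animals bound of \cite{0402.05055}), so $3A_3+\sum_{i=5}^{k}iA_i\ge\sum_i 2\lceil 2\sqrt{q_i}\,\rceil$; since the left-hand side is already bounded by the constraints above, each $q_i$ and the number of components are bounded, whence $A_4$ is bounded (Lemma~\ref{lemma_2} records $A_4\le k^4/4$, here $A_4\le324$), and then your identity $m=\sum_{i\ge4}(i-3)A_i-\frac{\tau}{2}$ bounds $m$; the paper's closing theorem packages this as $f_4(k)\le\frac{2k^2}{3}+\frac{k^4}{4}-2$, i.e.\ $f_4(6)\le346$. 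Replacing your appeal to \cite{companion} by Lemmas~\ref{lemma_1} and~\ref{lemma_2} closes the argument; as written, the finiteness of your search, and with it the completeness of the displayed list, is not established.
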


Extending these results requires quite some effort. During his diploma thesis \cite{dipl_achim} Achim Hildenbrandt was able to prove:

\begin{Lemma}
  Up to symmetry there are exactly $34$ two-connected $4$-regular matchstick graphs with $k=7$. All of them contain at most two
  inner vertices.
\end{Lemma}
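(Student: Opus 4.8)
The plan is to turn the classification into a finite search and then resolve each surviving candidate by a rigidity computation, exactly in the spirit of the proofs for Figures~\ref{fig_example_4}--\ref{fig_example_6}. First I would pin down the admissible parameters. Since the graph is $2$-connected, the outer boundary is a simple $7$-cycle, so $A_7\ge 1$, and every outer vertex has degree in $\{2,3,4\}$, whence $\tau=28-\sum_{v\in\mathcal{K}}\delta(v)$ is an even integer with $\tau\le 14$. Writing $I:=|\mathcal{I}|$, the identities of Lemma~\ref{lemma_A_i_sum} give $|E|=2I+14-\tfrac{\tau}{2}$ and a total face count $|F|=9+I-\tfrac{\tau}{2}$, i.e.\ $8+I-\tfrac{\tau}{2}$ inner faces. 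Feeding $k=7$ into the area argument (Lemma~\ref{lemma_area_argument}) and Corollary~\ref{cor_k_bound_inner_vertices} bounds $A_3$ from above, so that together with $A_3\ge 11-\tau$ from~(\ref{eq_force_triangles}) the value $\tau$ is forced into a short range once $I>0$, and it bounds $\frac{Area(\mathcal{M})}{\frac{\sqrt3}{4}}\le \frac{7}{4}\cot(\pi/7)\big/\frac{\sqrt3}{4}<9$. Combining this ceiling with the lower bound coming from the inner faces leaves only finitely many admissible pairs $(\tau,I)$; a short case check on the resulting face vectors $(A_3,A_4,A_5,\dots)$ eliminates everything with $I\ge 3$, which is where the statement ``at most two inner vertices'' comes from.

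Second, for each admissible parameter pattern I would generate the underlying $2$-connected $4$-regular planar graphs combinatorially using the recursive face-adding procedure described above (start from a triangle, add one bounded face at a time), keeping only those with $k=7$ and $I\le 2$. At this stage I would discard every graph that fails one of the cheap necessary criteria of~\cite{companion} or the exclusion criterion of Lemma~\ref{lemma_smallest_example}; this is what keeps the list manageable.

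Third, each surviving combinatorial graph must be tested for realizability as a genuine matchstick graph. Because these graphs are generically rigid, I would compute the vertex coordinates that are forced by the unit-length edge equations up to congruence, exactly as in the proof following Figure~\ref{fig_example_4}, and then (i) check that the one remaining diagonal or closing distance really equals $1$, and (ii) check that no two edges cross. Graphs whose forced coordinates contradict a unit distance, or force a crossing, are rejected; the rest are precisely the matchstick graphs, and counting them yields $34$.

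The hard part will be the third step together with the completeness of the second. Proving that a given combinatorial graph is \emph{not} realizable is not a single inequality but requires solving the polynomial distance system and showing a target distance can never equal $1$, as in the $\alpha+\beta=\tfrac{\pi}{3}$ computation after Figure~\ref{fig_example_5}; carrying this out over the full, still sizable family of candidates, while being certain the generation missed nothing, is exactly the substantial effort the text alludes to. The bound ``at most two inner vertices'' is the one structural payoff that falls out of the parameter analysis before the heavy case work begins.
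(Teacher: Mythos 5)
First, a point of comparison: the paper contains no proof of this lemma at all --- it is quoted from Hildenbrandt's diploma thesis \cite{dipl_achim}, prefaced only by the remark that extending the $k\le 6$ classification ``requires quite some effort.'' So your proposal can only be judged against the paper's general machinery, and measured that way it has a genuine gap right at the start. You claim that the parameter/face-vector analysis ``eliminates everything with $I\ge 3$,'' so that the search becomes finite and the bound of two inner vertices falls out before the case work. This cannot work: quadrangles are invisible to both counting tools. In Equation~(\ref{eq_A_i_sum}) the coefficient of $A_4$ is $4-i=0$, and the area argument of Lemma~\ref{lemma_area_argument} counts only odd faces, precisely because a unit rhombus can have arbitrarily small area. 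Concretely, for $\tau=6$, $A_7=1$, $A_3=5$, the face vector $(A_3,A_4,A_7)=(5,t,1)$ satisfies every inequality you list for \emph{every} $t\in\mathbb{N}$, and your own identity $|F|=9+I-\frac{\tau}{2}$ then forces $I=t$: the arithmetic puts no bound on $I$ whatsoever. This is exactly why the paper later builds the quadrangle-component machinery (Lemmas~\ref{lemma_1} and \ref{lemma_2}), and even that yields only $f_4(7)\le \frac{2\cdot 49}{3}+\frac{7^4}{4}-2\approx 630$. The statement ``at most two inner vertices'' is an \emph{outcome} of the exhaustive geometric classification, not an input to it; as written, your step two is not a finite search, and once repaired via Lemma~\ref{lemma_2} it is a search space hundreds of times larger than your sketch suggests.

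The second gap is the rigidity assumption in your third step. With $n=7+I$ and $|E|=14+2I-\frac{\tau}{2}$, comparing against the $2n-3=11+2I$ degrees of freedom shows the unit-distance system is underdetermined whenever $\tau\ge 8$, and when $I>0$ the admissible values are $\tau\in\{6,8,\dots,14\}$, so a large portion of the candidates are \emph{not} generically rigid --- quadrangle-rich graphs flex. The paper's own Figure~\ref{fig_example_6} is the cautionary example: its coordinates are not forced, and nonrealizability requires minimizing the distance $\sqrt{4-4\sqrt{3}\cos\left(\frac{\alpha-\beta}{2}\right)}$ over the one-parameter family constrained by $\alpha+\beta=\frac{\pi}{3}$. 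So ``compute the forced coordinates and check one closing distance'' fails for flexible candidates; one must instead decide a semi-algebraic feasibility question over continuous deformation families (including the non-crossing conditions), which is a substantially harder task than your outline acknowledges and is presumably the ``quite some effort'' the paper alludes to.
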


Here a combinatorial question arises: What is the maximum number $f_4(k)$ of inner vertices of a $4$-regular matchstick graphs with given cardinality $k$ of its outer face? In this context we would like to remark that the corresponding number $f_3(k)$ for $3$-regular matchstick graphs is unbounded for $k\ge 6$ \cite{girth_four}. 

\begin{Definition}
  Let $\mathcal{M}$ be a $4$-regular matchstick graph. Let $Q$ be the graph arising from $\mathcal{M}$, where
  the vertices correspond to the quadrangles from $\mathcal{M}$. Two vertices are connected by an edge iff the 
  corresponding quadrangles have a face in common. A \textbf{quadrangle component} of $\mathcal{M}$ is given by
  the inverse map of the connected components of $Q$.
\end{Definition}

\begin{Lemma}
  A quadrangle component $\mathcal{Q}$ consisting of $q$ quadrangles has an outer face consisting of at least
  $2\left\lceil 2\sqrt{q}\,\right\rceil$ edges and vertices.
\end{Lemma}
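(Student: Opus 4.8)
The plan is to show that a quadrangle component $\mathcal{Q}$ is, up to combinatorial isomorphism, an ordinary polyomino, and then to invoke the classical isoperimetric inequality for polyominoes. Write $L$ for the number of edges on the outer face of $\mathcal{Q}$; since this boundary is a closed walk, $L$ also equals the number of boundary vertices, so it suffices to bound $L$ from below. The decisive structural input is $4$-regularity: if a vertex $v$ lies in the interior of $\mathcal{Q}$, meaning every face incident to $v$ is one of the $q$ quadrangles, then $v$ cannot lie on the outer face of the surrounding matchstick graph $\mathcal{M}$, because that outer face is not a quadrangle. Hence $v\in\mathcal{I}(\mathcal{M})$ with $\delta(v)=4$, so exactly four quadrangles meet at every interior vertex of $\mathcal{Q}$. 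This is precisely the local structure of the square grid and rules out the degenerate fans (three or five rhombi closing up around a point) that a general rhombus union could otherwise exhibit.

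This degree-$4$ condition is what lets one lay a grid over $\mathcal{Q}$. I would propagate integer coordinates across the component: fix one quadrangle as the unit cell $[0,1]^2$, and whenever two quadrangles share an edge, assign the neighbour the cell obtained by reflecting across that shared edge. The degree-$4$ condition guarantees that the four cells surrounding any interior vertex close up into a $2\times 2$ block, so the assignment is locally consistent and realizes $\mathcal{Q}$ as a polyomino $P\subseteq\mathbb{Z}^2$ with $q$ cells. Boundary vertices, which may carry fewer than four incident quadrangles, impose no closing-up condition and hence cause no obstruction.

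The isoperimetric step is then routine. Let the bounding box of $P$ occupy $w$ columns and $h$ rows, so that $q\le wh$. In each occupied row the leftmost and rightmost cells each contribute a vertical boundary edge, giving at least $2h$ vertical boundary edges, and symmetrically there are at least $2w$ horizontal ones; hence $L\ge 2(w+h)\ge 4\sqrt{wh}\ge 4\sqrt{q}$ by the arithmetic--geometric mean inequality. Since the boundary is a closed rectilinear curve, its numbers of horizontal and of vertical edges are each even, so $L$ is even; thus $L/2$ is an integer with $L/2\ge 2\sqrt{q}$, whence $L/2\ge\lceil 2\sqrt{q}\rceil$ and $L\ge 2\lceil 2\sqrt{q}\rceil$, as claimed.

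The step I expect to require the most care is the global consistency of the coordinate propagation: I must rule out that following two different chains of adjacent quadrangles from the base cell assigns conflicting coordinates to the same quadrangle, i.e.\ that the developing map has no monodromy. The degree-$4$ condition supplies the local ingredient, but turning it into a genuine embedding requires knowing that $\mathcal{Q}$ is simply connected. I would handle this separately, observing that any enclosed hole would be a bounded non-quadrangular face surrounded by $\mathcal{Q}$ and would only add boundary edges; such configurations therefore cannot violate the lower bound, so the topological-disk case governs. Making this monodromy-free argument fully rigorous is the main obstacle, while the counting afterwards is elementary.
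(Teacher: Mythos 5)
There is a genuine gap, and it sits exactly where you feared least: not in the monodromy discussion, but in the claim that local consistency ``realizes $\mathcal{Q}$ as a polyomino $P\subseteq\mathbb{Z}^2$ with $q$ cells.'' Your degree-$4$ observation for interior vertices is correct, and together with simple connectivity it does give a well-defined cell-wise developing map into the grid --- but only an \emph{immersion}, not an embedding, and injectivity is false in general. Consider a strip of $q$ unit rhombi in which most consecutive pairs share opposite edges but three of the transitions pass through \emph{adjacent} edges with the same orientation: combinatorially the developed ribbon makes three same-sign quarter turns and, with suitable leg lengths (cells $(0,0),(1,0),(2,0),(2,1),(2,2),(1,2),(0,2),(0,1)$ and then $(0,0)$ again), it re-enters a cell it has already used. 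Geometrically, each combinatorial quarter turn is realized by a rhombus angle that can be $60^\circ$ or smaller, so the actual strip is a gently bent, embedded chain of rhombi; your interior-vertex condition is vacuous here because a strip has no interior vertices, so nothing in your argument excludes this. For such a component the image has fewer than $q$ cells, the inequality $q\le wh$ breaks down (a winding ribbon puts many cells, with multiplicity, into a small bounding box), and your row/column count bounds the perimeter of the \emph{image}, not of $\mathcal{Q}$. The strip itself satisfies the lemma trivially (its outer face has $2q+2$ edges), but it shows that the reduction to a polyomino, as written, is not a proof: an injectivity input is needed and is nowhere supplied.

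The handling of holes is a second, smaller gap: ``such configurations therefore cannot violate the lower bound'' is an assertion, not an argument, and unlike the polyomino case you cannot fill a hole of a quadrangle component with further quadrangles to reduce to the simply connected case; the monodromy around a hole (a composition of an even number of grid reflections) genuinely need not be the identity. A minor inaccuracy in the same spirit: the outer face is a closed walk that may repeat a cut vertex, so the number of boundary vertices can be strictly smaller than the number of boundary edges, contrary to your opening sentence. For comparison, the paper sidesteps the entire transport to $\mathbb{Z}^2$: its proof consists of the remark that the Harary--Harborth counting argument for extremal animals carries over when run \emph{intrinsically} on the quadrangle component itself, using the interior degree-$4$ structure in place of the grid. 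To salvage your approach you would have to prove injectivity of the developing map for the components that occur --- which the bent strip shows to be false in general --- so the viable repair is to redo the Harary--Harborth count directly on $\mathcal{Q}$, as the paper intends.
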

\begin{proof}
  The proof in \cite{0402.05055} also works for quadrangle components.
\end{proof}

\begin{Lemma}
  \label{lemma_1}
  Let $\mathcal{M}$ be a $4$-regular matchstick graph with $k\ge 5$, where the quadrangle components consist of
  $q_1,\dots,q_s$ quadrangles, then we have
  $3A_3+\sum\limits_{i=5}^k iA_i \ge\sum\limits_{i=1}^s 2\left\lceil 2\sqrt{q_i}\,\right\rceil$ and $\sum\limits_{i=1}^s q_i=A_4$.
\end{Lemma}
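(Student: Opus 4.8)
The plan is to prove the two assertions separately, beginning with the easy identity $\sum_{i=1}^s q_i = A_4$. The quadrangle components are, by definition, the preimages of the connected components of $Q$, whose vertices are in bijection with the quadrangular faces of $\mathcal{M}$. These components therefore partition the set of all quadrangles, so $\sum_{i=1}^s q_i$ simply counts every quadrangle exactly once. Because $k\ge 5$, the outer face is a $k$-gon and hence not a quadrangle, so every quadrangle is an inner face and $A_4$ equals the total number of quadrangles; this gives $\sum_{i=1}^s q_i = A_4$ at once.

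For the inequality I would argue by charging the outer-boundary edges of the quadrangle components to the surrounding non-quadrangular faces. First I would fix a quadrangle component $\mathcal{Q}_i$ and consider an edge $e$ on its outer boundary. Such an edge borders one quadrangle belonging to $\mathcal{Q}_i$; the face on the other side cannot be a quadrangle, for otherwise the two quadrangles would share the edge $e$, would be joined in $Q$, and hence would lie in a common component --- making $e$ an interior rather than a boundary edge. Consequently every outer-boundary edge is adjacent to exactly one face that is either a triangle, an inner face with at least five sides, or the outer face. Since $k\ge 5$, the outer face itself has at least five sides, which is precisely why it may be accounted for within $\sum_{i=5}^k iA_i$; this is the point at which the hypothesis $k\ge 5$ enters.

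Next I would observe that a single edge can lie on the outer boundary of at most one quadrangle component: if it bordered quadrangles of two distinct components, those quadrangles would again be adjacent in $Q$, a contradiction. Thus the outer-boundary edges of the various components form a disjoint collection, and by the preceding lemma their total number is at least $\sum_{i=1}^s 2\lceil 2\sqrt{q_i}\,\rceil$. On the other hand, each such boundary edge can be charged to its unique adjacent non-quadrangular face, and a face that is an $i$-gon has only $i$ edges and can therefore receive at most $i$ charges. Summing over all non-quadrangular faces --- triangles, the outer face, and inner faces with at least five sides, all of which have at most $k$ edges --- bounds the total number of outer-boundary edges from above by $3A_3 + \sum_{i=5}^k iA_i$. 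Chaining the two estimates yields the claimed inequality.

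The main obstacle, such as it is, is bookkeeping rather than depth: one must verify cleanly that each outer-boundary edge is incident to exactly one non-quadrangular face and that no edge is shared between the boundaries of two components, so that neither side of the count is inflated. The role of the hypothesis $k\ge 5$ --- guaranteeing that the outer face contributes to the range $i\ge 5$ and is never itself treated as a quadrangle --- also deserves explicit mention, since both statements would require separate handling for $k\le 4$.
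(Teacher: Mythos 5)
Your proof is correct, but it takes a different counting route from the paper: you charge boundary \emph{edges} of the quadrangle components, while the paper's (one-sentence) proof charges boundary \emph{vertices}. The paper argues that every vertex on the outer face of a quadrangle component lies on the boundary of at most two such components --- implicitly using the degree bound $4$, since quadrangles of distinct components around a vertex must be separated in the cyclic order by non-quadrangular faces --- and that such a vertex is incident to two non-quadrangular faces in the multiplicity-two case and to at least one otherwise; summing these incidences over the corners of triangles, larger inner faces and the outer face yields $3A_3+\sum_{i=5}^k iA_i$, and the auxiliary lemma, which bounds \emph{both} the edges and the vertices of a component's outer face below by $2\left\lceil 2\sqrt{q}\,\right\rceil$, closes the argument. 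Your edge version exploits exactly the edge half of that auxiliary lemma instead: an edge bounding quadrangles of two distinct components would make them adjacent in $Q$ and hence merge the components, so the boundary edge sets are pairwise disjoint and each boundary edge has a unique non-quadrangular face on its far side, whence an $i$-gon absorbs at most $i$ charges. This substitution is costless and arguably cleaner bookkeeping: disjointness of the edge sets is immediate, whereas the vertex count requires the multiplicity argument at vertices shared by two components. Your explicit verification of $\sum_{i=1}^s q_i=A_4$ (using $k\ge 5$ to rule out the outer face being a quadrangle), which the paper leaves unremarked, is also welcome; note that both your proof and the paper's share the implicit assumption, built into the lemma's statement, that no face has more than $k$ edges.
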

\begin{proof}
  Every vertex $v$ on the outer face of a quadrangle component can be part of at most two quadrangle components. In case of
  equality vertex $v$ is adjacent to two non-quadrangular faces. In the other case $v$ is adjacent to at least one
  non-quadrangular face.
\end{proof}

\begin{Lemma}
  \label{lemma_2}
  For a $4$-regular matchstick graph we have $A_4\le \frac{k^4}{4}$.
\end{Lemma}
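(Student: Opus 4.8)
The plan is to bound $A_4$ by controlling the total boundary length of all quadrangle components, which by Lemma~\ref{lemma_1} is governed by the non-quadrangular faces. Writing $q_1,\dots,q_s$ for the component sizes, I would first combine the trivial inequality $2\lceil 2\sqrt{q_i}\,\rceil\ge 4\sqrt{q_i}$ with the subadditivity of the square root, $\sum_{i=1}^s\sqrt{q_i}\ge\sqrt{\sum_{i=1}^s q_i}=\sqrt{A_4}$, to upgrade Lemma~\ref{lemma_1} into the single chain
\[
  4\sqrt{A_4}\;\le\;\sum_{i=1}^s 2\left\lceil 2\sqrt{q_i}\,\right\rceil\;\le\;3A_3+\sum_{i=5}^k iA_i .
\]
It then suffices to show that the right-hand side is at most $2k^2$, since this yields $A_4\le\tfrac1{16}(2k^2)^2=\tfrac{k^4}{4}$.

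The key step is the observation that the right-hand side equals $4N-8+\tau$, where $N:=A_3+\sum_{i\ge5}A_i$ denotes the number of non-quadrangular faces. This follows from Equation~(\ref{eq_A_i_sum}), which gives $\sum_{i\ge5}(i-4)A_i=A_3-8+\tau$; inserting $\sum_{i\ge5}iA_i=\sum_{i\ge5}(i-4)A_i+4\sum_{i\ge5}A_i$ makes all $A_3$-terms cancel. I would emphasize that this identity is the conceptual heart of the proof: the naive estimate $3A_3+\sum iA_i\le kN$ would only give a bound of order $k^6$, and the identity is precisely what removes the spurious factor $k$.

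It remains to bound $N$ by $O(k^2)$ and $\tau$ by $O(k)$. For $N$ I would split the non-quadrangular faces into odd-gons and even-gons of size at least $6$. The area argument (Lemma~\ref{lemma_area_argument}) bounds the number of odd-gons by $C_k:=\tfrac{k\cot(\pi/k)}{\sqrt3}\le\tfrac{k^2}{\pi\sqrt3}$, using $\cot x\le 1/x$; and since every even $i\ge6$ satisfies $i-4\ge2$, the identity $\sum_{i\ge5}(i-4)A_i=A_3-8+\tau$ together with $A_3\le C_k$ bounds the number of large even-gons by $\tfrac12(A_3-8+\tau)\le\tfrac12(C_k+\tau)$, so that $N\le\tfrac32 C_k+\tfrac{\tau}{2}$. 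Finally, since $\mathcal{M}$ is connected it has no isolated vertices, whence $\tau=4k-\sum_{v\in\mathcal K}\delta(v)\le 3k$ (and $\tau\le 2k+1$ once one uses that a $4$-regular matchstick graph has no bridge, apart from at most one pendant vertex). Collecting terms gives
\[
  3A_3+\sum_{i=5}^k iA_i=4N-8+\tau\;\le\;6C_k+3\tau-8\;\le\;\tfrac{6k^2}{\pi\sqrt3}+9k-8 ,
\]
and since $\tfrac{6}{\pi\sqrt3}\approx1.10<2$ the dominant term sits comfortably below $2k^2$.

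The main obstacle is not the geometry but pinning down the constants: the leading contribution $6C_k\approx1.10\,k^2$ is safely under the target $2k^2$, yet the lower-order term $9k-8$ must still be absorbed, which fails only for the smallest values of $k$. For those few cases I would invoke the explicit classifications of $4$-regular matchstick graphs with $k\le 6$ (and $k=7$) obtained earlier, where $A_4$ is a small constant and $A_4\le \tfrac{k^4}{4}$ holds by inspection. This completes the argument for all $k$.
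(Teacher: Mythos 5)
Your route is, at heart, the paper's own proof reorganized: both arguments rest on the same four ingredients (the area argument bounding the odd-gons, the identity~(\ref{eq_A_i_sum}), a bound on $\tau$ linear in $k$, and Lemma~\ref{lemma_1} combined with $2\lceil 2\sqrt{q_i}\,\rceil\ge 4\sqrt{q_i}$ and subadditivity of the square root), and your exact identity $3A_3+\sum_{i\ge 5}iA_i=4N-8+\tau$ is a clean repackaging of the paper's step ``adding the last two inequalities and multiplying by three.'' However, as written your proof has a concrete gap at $k=8$ and $k=9$. Your final estimate requires $\frac{6k^2}{\pi\sqrt3}+9k-8\le 2k^2$, which fails for every $k\le 9$: at $k=9$ the left-hand side is $\approx 162.3>162=2k^2$, and at $k=8$ it is $\approx 134.6>128$. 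The classifications you invoke as a fallback stop at $k=7$, so $k=8,9$ are covered by neither branch. Moreover, for $k\in\{5,6,7\}$ those classifications list only the \emph{two-connected} graphs, so ``$A_4\le k^4/4$ by inspection'' does not literally cover all matchstick graphs with those boundary sizes either.

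The missing step is the sharper bound $\tau\le 2k$, which you mention parenthetically but then abandon in favor of $\tau\le 3k$: since $\tau$ is even and, by the remark following Lemma~\ref{lemma_A_i_sum}, a $4$-regular matchstick graph contains no bridge, every outer vertex has degree at least $2$ and hence $\tau=4k-\sum_{v\in\mathcal{K}}\delta(v)\le 2k$ --- exactly the estimate the paper itself uses in~(\ref{eq_t_2}). With $\tau\le 2k$ your chain becomes $4N-8+\tau\le 6C_k+3\tau-8\le\frac{6k^2}{\pi\sqrt3}+6k-8$, and this is at most $2k^2$ for all $k\ge 5$ (at $k=5$ the margin is razor-thin: $\approx 27.6+22=49.6\le 50$), which is precisely the range in which Lemma~\ref{lemma_1} is stated; the residual cases $k\in\{3,4\}$ satisfy $A_4\le 1\le k^4/4$ by the small-$k$ classification. (That Lemma~\ref{lemma_1} requires $k\ge 5$ is a caveat the paper's proof silently shares, since it too feeds Lemma~\ref{lemma_1} into its final step without treating $k\le 4$.) With this one repair your argument is correct and, constants aside, identical in substance to the paper's.
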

\begin{proof}
  A $k$-gon with side lengths $1$ has an area of at most $\frac{k^2}{4\pi}$. Combining this with Lemma~\ref{lemma_area_argument}
  we obtain $\sum\limits_{i=1}^\infty A_{2i+1} \le \frac{k^2}{\sqrt{3}\pi}$. Subtracting (\ref{eq_A_i_sum}) yields
  \begin{equation}
    \label{eq_t_2}
    \sum_{i=5}^\infty 2\cdot\left\lfloor\frac{i-3}{2}\right\rfloor\cdot A_i \le
    \frac{k^2}{\sqrt{3}\pi}-8+\tau\le \frac{k^2}{\sqrt{3}\pi}-8+2k\le\frac{2k^2}{5}.
  \end{equation}
  Adding the last two inequalities and multilying by three results in
  \begin{equation}
    \label{eq_t_3}
    3A_3+\sum_{i=5}^k iA_i\le3\sum_{i=1}^\infty A_{2i+1}+\sum_{i=5}^\infty 6\cdot\left\lfloor\frac{i-3}{2}\right\rfloor\cdot A_i
    \le 2k^2.
  \end{equation}
  From Lemma~\ref{lemma_1} we deduce
  $
    2k^2 \ge3A_3+\sum\limits_{i=5}^k iA_i \ge 4\sqrt{A_4}
  $
  and conclude $A_4\le \frac{k^4}{4}$.
\end{proof}

\begin{Theorem}
  $$
    f_4(k)\le \frac{2k^2}{3}+\frac{k^4}{4}-2.
  $$
\end{Theorem}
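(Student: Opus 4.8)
The goal is to bound the number of inner vertices $f_4(k)$ in terms of the counts $A_i$ already controlled by the preceding lemmas. The natural plan is to start from the vertex-count formula in Lemma~\ref{lemma_A_i_sum}, namely $|V|=\frac14\sum_{i\ge 3} i\,A_i+\frac{\tau}{4}$, and to separate the contribution of triangles and quadrangles (the ``small'' faces that can accumulate in large numbers) from that of the larger faces. First I would write $n=|V|$ and split the sum as $4n-\tau=3A_3+4A_4+\sum_{i\ge 5} iA_i$. Since we want an \emph{upper} bound on the inner vertices, and inner vertices are at most $n$ minus the outer vertices, I would aim to bound $n$ directly: the inner-vertex count satisfies $|\mathcal{I}|=n-k$, so it suffices to bound $n$.

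The key is that each of the three face-type contributions has already been estimated. From Inequality~(\ref{eq_t_3}) in the proof of Lemma~\ref{lemma_2} we have $3A_3+\sum_{i=5}^k iA_i\le 2k^2$, which handles the triangles and all faces of size at least five in a single blow. The quadrangle contribution $4A_4$ is controlled by Lemma~\ref{lemma_2}, giving $4A_4\le k^4$. Substituting both into $4n=3A_3+4A_4+\sum_{i\ge 5}iA_i+\tau$ yields $4n\le 2k^2+k^4+\tau$. Using $\tau\le 2k$ (each outer vertex has degree at least $2$, so the deficiency $\tau=4k-\sum_{v\in\mathcal K}\delta(v)\le 4k-2k=2k$), this gives $4n\le 2k^2+k^4+2k$, hence a bound on $n$ of roughly $\frac{k^2}{2}+\frac{k^4}{4}$.

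To reach the sharper stated bound $f_4(k)\le \frac{2k^2}{3}+\frac{k^4}{4}-2$, I would be more careful with the constants: the factor of three separating (\ref{eq_t_2}) and (\ref{eq_t_3}) was introduced only to clear the $\frac13$ in front of $3A_3$, so working instead directly with $n=\frac14(3A_3+4A_4+\sum_{i\ge5}iA_i)+\frac{\tau}{4}$ and dividing the $2k^2$ estimate of (\ref{eq_t_3}) by $3$ recovers the leading $\frac23 k^2$ term rather than $\frac12 k^2$. The quadrangle term contributes $A_4\le \frac{k^4}{4}$ directly (not $4A_4$, since we divide by $4$ and $A_4$ counts one vertex's worth per quadrangle in the bookkeeping), and the additive constant $-2$ should fall out of the Euler-formula relation $n=|F|-2+\frac{\tau}{2}$ combined with the fact that the outer face is one of the counted faces. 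I would assemble these pieces and then subtract the $k$ outer vertices is \emph{not} needed, since the claimed bound is already an upper bound on all inner vertices once the outer contribution is accounted for in $\tau$.

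The main obstacle I anticipate is tracking the constants precisely: the interplay between the $\frac14$ in the vertex formula, the extra factor of three in~(\ref{eq_t_3}), and the additive terms from $\tau$ and from Euler's formula must be reconciled so that the coefficients land exactly on $\frac23$, $\frac14$, and $-2$. In particular one must be careful whether $A_4\le\frac{k^4}{4}$ enters as $A_4$ or as $4A_4$ after dividing the vertex formula by $4$, and whether the $-2$ genuinely comes from the $|F|-2$ in Euler's relation or must be argued separately. None of this is deep, but it is exactly the kind of constant-chasing where an off-by-a-factor error would break the stated inequality, so that bookkeeping is where I would spend the most care.
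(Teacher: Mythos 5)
Your ingredients are exactly those of the paper's proof --- Inequality~(\ref{eq_t_3}), Lemma~\ref{lemma_2}, $\tau\le 2k$, and the relation $n=|F|-2+\frac{\tau}{2}$ from Lemma~\ref{lemma_A_i_sum} --- but your assembly contains two concrete confusions. First, the ``division by $3$'' belongs with the \emph{face-count} relation, not with the vertex formula $\frac14\sum iA_i$: since $iA_i\ge 3A_i$ for every face size $i\ge 3$, dividing (\ref{eq_t_3}) by $3$ gives $A_3+\sum_{i=5}^k A_i\le\frac{2k^2}{3}$, which together with $A_4\le\frac{k^4}{4}$ yields $|F|=\sum_i A_i\le\frac{2k^2}{3}+\frac{k^4}{4}$, and then $n=|F|-2+\frac{\tau}{2}\le\frac{2k^2}{3}+\frac{k^4}{4}-2+k$; trying to feed the divided estimate back into $\frac14\sum iA_i$, as you describe, is not meaningful. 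Second, your closing claim that subtracting the $k$ outer vertices is ``not needed'' is wrong, and it is precisely where the constants close: $f_4(k)$ counts \emph{inner} vertices, so $f_4(k)=n-k$, and this $-k$ cancels the $+\frac{\tau}{2}\le k$, landing exactly on $\frac{2k^2}{3}+\frac{k^4}{4}-2$. A bound on $n$ alone does not imply the theorem for small $k$.

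Ironically, the computation you carried out and then abandoned already suffices, and you abandoned it for a backwards reason: you call the stated bound ``sharper'', but $\frac12<\frac23$, so your leading term is the stronger one. From $4n-\tau=3A_3+4A_4+\sum_{i=5}^k iA_i\le 2k^2+k^4$ and $\tau\le 2k$ you get $n\le\frac{k^2}{2}+\frac{k^4}{4}+\frac{k}{2}$, hence $f_4(k)=n-k\le\frac{k^2}{2}+\frac{k^4}{4}-\frac{k}{2}$, and this is at most $\frac{2k^2}{3}+\frac{k^4}{4}-2$ whenever $\frac{k^2}{6}+\frac{k}{2}\ge 2$, i.e.{} for all $k\ge 3$, which always holds. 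So either route proves the theorem (yours even slightly more strongly in the quadratic term); what was missing was not an idea but finishing the arithmetic, together with retracting the incorrect dismissal of the $n-k$ step.
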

\begin{proof}
  From Inequality~(\ref{eq_t_3}) and Lemma~\ref{lemma_2} we conclude
  $$
    |F|=\sum_{i=3}^\infty A_i\le \frac{2k^2}{3}+\frac{k^4}{4}.
  $$
  Applying $n=|V|=|F|-2+\frac{\tau}{2}$ from Lemma~\ref{lemma_A_i_sum} and using $\tau\le 2k$ yields the proposed inequality.
\end{proof}

\end{document}